\def\1{\bm{1}}
\DeclareMathAlphabet{\mathsfit}{\encodingdefault}{\sfdefault}{m}{sl}
\SetMathAlphabet{\mathsfit}{bold}{\encodingdefault}{\sfdefault}{bx}{n}
\newcommand{\E}{\mathbb{E}}
\newcommand{\R}{\mathbb{R}}
\DeclareMathOperator*{\argmin}{arg\,min}
\theoremstyle{plain}
\newtheorem{theorem}{Theorem}
\newtheorem{lemma}{Lemma}
\newtheorem{assumption}{Assumption}
\theoremstyle{definition}
\newtheorem{definition}{Definition}
\newtheorem{remark}{Remark}
\title{Accelerated stochastic first-order method for convex optimization under heavy-tailed noise}
\author{Chuan He\thanks{Department of Mathematics, Link\"oping University, Sweden (email: {\tt chuan.he@liu.se}). This work was partially supported by the Wallenberg AI, Autonomous Systems and Software Program (WASP) funded by the Knut and Alice Wallenberg Foundation.}
\and
Zhaosong Lu\thanks{Department of Industrial and Systems Engineering, University of Minnesota, USA (email: {\tt zhaosong@umn.edu}). This work was partially supported by the Office of Naval Research under Award N00014-24-1-2702, the Air Force Office of Scientific Research under Award FA9550-24-1-0343, and the National Science Foundation under Awards 2211491 and 2435911.}
}
\date{
October 13, 2025
}
\begin{document}
\maketitle
	
\begin{abstract}
We study convex composite optimization problems, where the objective function is given by the sum of a prox-friendly function and a convex function whose subgradients are estimated under heavy-tailed noise. Existing work often employs gradient clipping or normalization techniques in stochastic first-order methods to address heavy-tailed noise. In this paper, we demonstrate that a vanilla stochastic algorithm---without additional modifications such as clipping or normalization---can achieve optimal complexity for these problems. In particular, we establish that an accelerated stochastic proximal subgradient method achieves a first-order oracle complexity that is universally optimal for smooth, weakly smooth, and nonsmooth convex optimization, as well as for stochastic convex optimization under heavy-tailed noise. Numerical experiments are further provided to validate our theoretical results.
\end{abstract}

{\small \noindent\textbf{Keywords:} Convex composite optimization, heavy-tailed noise, accelerated stochastic proximal subgradient method, first-order oracle complexity}

\medskip

{\small \noindent\textbf{Mathematics Subject Classification:} 49M05, 49M37, 90C25, 90C30}

\section{Introduction}
In this paper, we consider a class of convex composite optimization problems of the form
\begin{align}\label{ucpb}
F^* := \min_{x\in\R^n} \{F(x) := f(x) + h(x)\},    
\end{align}
where $f,h:\R^n\to(-\infty,\infty]$ are proper lower semicontinuous convex functions such that $\mathrm{dom}\,h\subseteq\mathrm{dom}\,f$. We assume that $f$ satisfies a hybrid of smooth and nonsmooth conditions: 
\begin{equation} \label{cond:hybd}
\|f^\prime(y)-f^\prime(x)\|\le L_f\|y-x\| + H_f\|y-x\|^\nu + M_f \ \qquad \forall f^\prime(y)\in \partial f(y), f^\prime(x)\in\partial f(x), x,y\in\mathrm{dom}\,f
\end{equation}
for some constants $L_f,H_f,M_f\ge0$ and $\nu\in(0,1)$.  In addition, we assume that the proximal operator associated with $h$ can be computed exactly. Clearly,  this class of functions $f$ includes Lipschitz smooth, H\"{o}lder smooth, and Lipschitz continuous functions, as well as any nonnegative combination of functions from these three subclasses. As recently observed in \cite[Example 1]{nesterov2025universal}, the sum of a Lipschitz smooth function and a H\"{o}lder smooth function is not necessarily a H\"{o}lder smooth function. Moreover, the sum of  a H\"{o}lder smooth function and a Lipschitz continuous function is not necessarily a Lipschitz continuous function. Consequently, the class of problems under consideration is broader than the class of problems studied in \cite{nesterov2015universal} with $f$ satisfying
\[
\|f^\prime(y)-f^\prime(x)\|\le H_f\|y-x\|^\nu\ \quad \forall f^\prime(y)\in \partial f(y), f^\prime(x)\in\partial f(x), x,y\in\mathrm{dom}\,f
\]
for some $H_f>0$ and $\nu\in [0,1]$.

With the rise of data science, instances of problem \eqref{ucpb} are increasingly common in modern, often large-scale, applications. As a result, the subgradients of $f$ are typically expensive to compute exactly and can usually only be approximated using stochastic estimators. Stochastic first-order methods have been extensively studied for solving \eqref{ucpb} and its variants; see, e.g., \cite{alacaoglu2025towards,bottou2018optimization,davis2021low,foster2019complexity,lan2012optimal,liang2024single,liu2024high,moulines2011non,nemirovski2009robust,nemirovski1983problem,polyak1992acceleration,robbins1951stochastic,shalev2009stochastic}. Remarkably, an optimal method has been developed in \cite{lan2012optimal} for solving a special case of  \eqref{ucpb} with $H_f=0$ and $h$ being the indicator function of a simple closed convex set, under the assumption that the stochastic subgradient estimator $G(\cdot;\xi)$ of $f(\cdot)$ is unbiased and has bounded variance---that is, $G(\cdot;\xi)$ satisfies the following conditions:
\begin{align}\label{asp:class}
\E[G(x;\xi)] \in\partial f(x),\quad \E\big[\|G(x;\xi) -\E[G(x;\xi)]\|^2\big] \le \sigma^2\quad \forall x\in\R^n
\end{align}
for some $\sigma>0$. Under these conditions, it has been shown in \cite{lan2012optimal} that a projected stochastic subgradient method with Nesterov's acceleration scheme achieves an optimal first-order oracle complexity of 
\begin{align}\label{cplx-old}
\mathcal{O}\Big(\Big(\frac{L_f}{\epsilon}\Big)^{\frac{1}{2}} + \Big(\frac{M_f + \sigma}{\epsilon}\Big)^2\Big)\ \ \text{and}\ \ \mathcal{O}\Big(\Big(\frac{L_f}{\epsilon}\Big)^{\frac{1}{2}} + \Big(\frac{M_f + \sigma\log(1/\delta)}{\epsilon}\Big)^2\Big)    
\end{align}
for finding an $\epsilon$-optimal solution of \eqref{ucpb} in expectation, and an $\epsilon$-optimal solution with probability at least $1-\delta$, respectively (see Definition \ref{def:apx-sol} for precise definitions).
The first complexity bound above recovers the optimal results achieved by first-order methods for smooth, nonsmooth, and stochastic convex optimization in a unified manner.

In recent years, with the development of machine learning and related fields, challenging stochastic optimization problems often extend beyond those satisfying classical assumption imposed in \eqref{asp:class}. Recent numerical evidence \cite{gurbuzbalaban2021heavy,simsekli2019heavy,simsekli2019tail,zhang2020adaptive} demonstrates that the stochastic estimator $G(\cdot;\xi)$ in these problems satisfies the following conditions, which include heavy-tailed noise scenarios:
\begin{align*}
\E[G(x;\xi)] \in\partial f(x),\quad \E\big[\|G(x;\xi) -\E[G(x;\xi)]\|^\alpha\big] \le \sigma^\alpha\qquad \forall x\in\R^n
\end{align*}
for some $\sigma>0$ and $\alpha\in(1,2]$, generalizing the classical assumptions in \eqref{asp:class}. Indeed, when $\alpha<2$, gradient estimators $G(\cdot;\xi)$ can exhibit unbounded variance, which may preclude the applicability of many classic algorithmic frameworks for them that are specifically developed for problems under condition \eqref{asp:class}. Notably, most existing algorithmic developments in stochastic optimization under heavy-tailed noise rely on gradient clipping  \cite{cutkosky2021high,gorbunov2020stochastic,liu2023stochastic,nguyen2023improved,sadiev2023high,zhang2020adaptive} or normalization techniques \cite{he2025complexity,hubler2024gradient,liu2025nonconvex,sun2024gradient}, providing theoretical justification for their empirical success in deep learning. Nevertheless, a recent study \cite{fatkhullin2025can} shows that vanilla SGD, without using gradient clipping or gradient normalization, can be applied to a special case of \eqref{ucpb} with $H_f=0$ and $h$ being an indicator function, achieving a first-order oracle complexity of $\mathcal{O}(\epsilon^{-\alpha/(\alpha-1)})$. Given that no acceleration scheme is used in \cite{fatkhullin2025can}, the following natural question arises:

{\quote \emph{Is an accelerated vanilla stochastic algorithm without clipping or normalization applicable to the general problem  \eqref{ucpb} under heavy-tailed noise?}}

\medskip

This paper provides an affirmative answer to this question. Specifically,  we show that an accelerated stochastic proximal subgradient method (SPGM) achieves optimal complexity guarantees for solving problem  \eqref{ucpb} under heavy-tailed noise. Our main contributions are summarized below.

\begin{itemize}
\item We show that a vanilla SPGM and its accelerated counterpart, without any modifications such as clipping or normalization, can find an approximate optimal solution of \eqref{ucpb} both in expectation and with high probability. 

\item We show that the vanilla SPGM (Algorithm \ref{alg:dq-f}) achieves a first-order oracle complexity of 
\begin{subequations}\label{cplx-1}
\begin{align}
&\mathcal{O}\Big(\frac{L_f}{\epsilon}+\Big(\frac{H_f}{\epsilon}\Big)^{\frac{2}{1+\nu}} + \Big(\frac{M_f}{\epsilon}\Big)^2 + \Big(\frac{\sigma}{\epsilon}\Big)^{\frac{\alpha}{\alpha-1}}\Big),\\
\text{and}\ \ &{\mathcal{O}}\Big(\frac{L_f}{\epsilon} + \Big(\frac{H_f}{\epsilon}\Big)^{\frac{2}{1+\nu}} + \Big(\frac{M_f}{\epsilon}\Big)^2 + \Big(\frac{\sigma\ln(1/\delta)^{1/\alpha}}{\epsilon}\Big)^{\frac{\alpha}{\alpha-1}}\Big)\label{cplx-1-p}
\end{align}
\end{subequations}
for finding an $\epsilon$-stochastic optimal solution and an $(\epsilon,\delta)$-stochastic optimal solution, respectively. In addition, we establish that the accelerated SPGM (Algorithm \ref{alg:ac-dq-f}) achieves a first-order oracle complexity of 
\begin{subequations}\label{cplx-a2}
\begin{align}
&\mathcal{O}\Big(\Big(\frac{L_f}{\epsilon}\Big)^{\frac{1}{2}} + \Big(\frac{H_f}{\epsilon}\Big)^{\frac{2}{1+3\nu}} + \Big(\frac{M_f}{\epsilon}\Big)^2 + \Big(\frac{\sigma}{\epsilon}\Big)^{\frac{\alpha}{\alpha-1}}\Big),\\
\text{and}\ \ &{\mathcal{O}}\Big(\Big(\frac{L_f}{\epsilon}\Big)^{\frac{1}{2}} +\Big(\frac{H_f}{\epsilon}\Big)^{\frac{2}{1+3\nu}} + \Big(\frac{M_f}{\epsilon}\Big)^2 + \Big(\frac{\sigma\ln(1/\delta)^{1/\alpha}}{\epsilon}\Big)^{\frac{\alpha}{\alpha-1}}\Big) \label{cplx-2-p}
\end{align}    
\end{subequations}
for finding an $\epsilon$-stochastic optimal solution and an $(\epsilon,\delta)$-stochastic optimal solution, respectively.
\end{itemize}
It shall be mentioned that the accelerated SPGM achieves universally optimal complexity results for smooth, weakly smooth, and nonsmooth convex optimization, as well as for stochastic convex optimization under heavy-tailed noise.
Moreover, for the aforementioned special case of problem \eqref{ucpb} studied in \cite{lan2012optimal}, our complexity bounds \eqref{cplx-1-p} and \eqref{cplx-2-p} with $\alpha=2$ enjoy an improved dependence on $\ln(1/\delta)$ compared to the bound in \eqref{cplx-old} obtained in \cite{lan2012optimal}.

The rest of this paper is organized as follows. Section~\ref{sec:notation} presents notation and assumptions. In Sections~\ref{sec:psg} and \ref{sec:apsg}, we present SPGM and its accelerated counterpart along with their first-order oracle complexity results for finding an approximate solution of problem \eqref{ucpb} under heavy-tailed noise.    Section~\ref{sec:experiment} presents  preliminary numerical results illustrating the performance of the proposed methods. Finally, we provide the proof of the main results in Section \ref{sec:proof}.

\subsection{Notation and assumptions} \label{sec:notation}

Throughout this section, we use $\R^n$ to stand for the $n$-dimensional Euclidean space, and $\|\cdot\|$ to denote the Euclidean norm for vectors. For any proper closed convex function $\varphi$, we denote its subdifferential by $\partial \varphi$ and define the proximal mapping associated with $\varphi$, with parameter $\eta > 0$, as
\begin{align*}
\mathrm{prox}_{\eta \varphi}(x):=\argmin_{z\in\R^n}\Big\{\varphi(z) + \frac{1}{2\eta}\|z-x\|^2\Big\}.    
\end{align*}
We denote the domain of $\varphi$ as $\mathrm{dom}\,\varphi$. For any $s\in\R$ and $\mathcal{A}\subseteq\mathbb{R}$, we define the Boolean indicator function $\mathbbm{1}_{\mathcal{A}}(s)$ to be $1$ if $s\in\mathcal{A}$ and $0$ otherwise. In addition, we use $\mathcal{O}(\cdot)$ to denote the standard big-O notation.

We now make the following assumption throughout this paper.

\begin{assumption}\label{asp:basic}
\begin{enumerate}[{\rm (a)}]
\item  The function $f$ satisfies \eqref{cond:hybd} for some constants $L_f,H_f,M_f\ge0$ and $\nu\in(0,1)$. 
\item The proximal operator associated with $h$ can be exactly evaluated, and its domain $\mathrm{dom}\,h$ is bounded. 
\item The stochastic subgradient estimator $G:\R^n\times\Xi\to\R^n$ satisfies
\begin{align}\label{cond:ac}
\E[G(x;\xi)] \in \partial f(x),\quad \E[\|G(x;\xi) - \E[G(x;\xi)]\|^\alpha] \le \sigma^\alpha \quad\forall x\in\mathrm{dom}\,f
\end{align}
for some $\sigma>0$ and $\alpha\in(1,2]$.
\end{enumerate}
\end{assumption}

We next make some remarks on Assumption \ref{asp:basic}.

\begin{remark}
(i) The class of $f$ satisfying Assumption~\ref{asp:basic}(a) is broad, which includes smooth (gradient Lipschitz continuous), weakly smooth (gradient H\"{o}lder continuous), and nonsmooth (Lipschitz continuous) functions, as well as any nonnegative combination of functions from these subclasses. Problem \eqref{ucpb} with $f$ from these subclasses have been extensively studied in the literature (e.g., \cite{grimmer2024optimal,guigues2024universal,lan2012optimal}). However, there was no study on problem \eqref{ucpb} with $f$ satisfying Assumption~\ref{asp:basic}(a) except a very recent work \cite{nesterov2025universal}.  In particular,  \cite{nesterov2025universal} proposed first-order methods and established complexity guarantees for such problem under a deterministic first-order oracle, where the exact gradient or an exact subgradient of $f$ is used.

(ii) By a standard argument for deriving the descent inequality, \eqref{cond:hybd} implies
\begin{align}\label{ineq:desc}
f(y) \le f(x) + f^\prime(x)^T(y-x) + \frac{L_f}{2}\|y-x\|^2 + \frac{H_f}{1+\nu}\|y-x\|^{1+\nu} + M_f\|y-x\|
\end{align}
holds for all $f^\prime(x)\in\partial f(x), x,y\in\mathrm{dom}\,f$. It follows from \cite[Lemma 2]{nesterov2015universal} that
\[
 \frac{H_f}{1+\nu}\|y-x\|^{1+\nu}  \leq \frac12 L(\varepsilon) \|y-x\|^2 + \frac{\varepsilon}{8},
\]
where
\begin{align}\label{def:inexact-Lip}
L(\varepsilon) := H_f^{\frac{2}{1+\nu}}\Big(\frac{4}{\varepsilon}\Big)^{\frac{1-\nu}{1+\nu}}\qquad\forall \varepsilon>0.
\end{align}
This together with \eqref{ineq:desc} implies that
\begin{align}\label{ineq:desc_new}
f(y) \le f(x) + f^\prime(x)^T(y-x) + \frac{1}{2}\big(L_f+L(\varepsilon) \big)\|y-x\|^2 + M_f\|y-x\|+ \frac{\varepsilon}{8} 
\end{align}
holds for any $\varepsilon>0$ and all $f^\prime(x)\in\partial f(x), x,y\in\mathrm{dom}\,f$.

(iii) Assumption~\ref{asp:basic}(b) is quite common in stochastic optimization. We define the diameter of $\mathrm{dom}\,h$ as
\begin{align}\label{def:diam-h}
D_h := \max_{x,y\in\mathrm{dom}\,h}\{\|x-y\|\}.    
\end{align}
Moreover, Assumption~\ref{asp:basic}(c) states that $G(x;\xi)$ is an unbiased estimator of a subgradient of $f(x)$, and its $\alpha$th central moment is uniformly bounded. It is weaker than the commonly used variance bounded assumption corresponding to the case $\alpha=2$. When $\alpha\in(1,2)$, the stochastic subgradient noise exhibits heavy-tailed behavior (see, e.g., \cite{zhang2020adaptive}), a phenomenon commonly encountered in machine learning applications. For ease of presentation, we introduce two related quantities, $\Lambda(\varepsilon)^2$ and $\widetilde\Lambda(\delta,\varepsilon)^2$, as follows:
\begin{align}
\Lambda(\varepsilon)^2 := 8(\alpha-1)^2\Big(\frac{\sigma}{\alpha}\Big)^{\frac{\alpha}{\alpha-1}}\Big(\frac{8D_h}{\varepsilon}\Big)^{\frac{2-\alpha}{\alpha-1}},\quad\widetilde\Lambda(\delta,\varepsilon)^2:=\Big(1+\ln\Big(\frac{2}{\delta}\Big)\Big)^{\frac{1}{\alpha-1}}\Lambda(\varepsilon)^2 \qquad \forall \varepsilon,\delta>0,\label{def:sigma1} 
\end{align}
which  will be used to analyze stochastic algorithms under heavy-tailed noise.
\end{remark}

We next introduce another assumption, which will be used to establish complexity bounds for finding approximate solutions of problem \eqref{ucpb} with high-probability guarantees.

\begin{assumption}\label{asp:sub-weibull}
The stochastic subgradient estimator $G:\R^n\times\Xi\to\R^n$ satisfies
\begin{align}\label{cond:sub-wb}
\E[\exp\{\|G(x;\xi) - \E[G(x;\xi)]\|^\alpha/\sigma^\alpha\}]\le \exp\{1\}\qquad \forall x\in\mathrm{dom}\,f,    
\end{align}
where $\sigma>0$ and $\alpha\in(1,2]$ are given in Assumption \ref{asp:basic}(c).
\end{assumption}

We now make some remarks regarding Assumption \ref{asp:sub-weibull}.

\begin{remark}
Assumption \ref{asp:sub-weibull} states that the stochastic subgradient noise follows a sub-Weibull distribution (see, e.g., \cite{vladimirova2020sub}). This assumption is weaker than the standard sub-Gaussian assumption imposed in \cite[Assumption A2]{lan2012optimal}, which corresponds to the case with $\alpha=2$. When $\alpha\in(1,2)$, condition \eqref{cond:sub-wb} implies the second condition in \eqref{cond:ac} and indicates that the stochastic subgradient noise has heavy tails. 
\end{remark}

We next give formal definitions for approximate stochastic optimal solutions of problem \eqref{ucpb}.

\begin{definition}\label{def:apx-sol}
Let $\epsilon,\delta\in(0,1)$. We say that 
\begin{itemize}
    \item $x\in\R^n$ is {\it an $\epsilon$-stochastic optimal solution} of \eqref{ucpb} if it satisfies $\E[F(x) - F^*]\le\epsilon$; and
    \item $x\in\R^n$ is {\it an $(\epsilon,\delta)$-stochastic optimal solution} $x$ of \eqref{ucpb} if it satisfying $F(x) - F^*\le\epsilon$ with probability at least $1-\delta$.
\end{itemize}
\end{definition}

\section{A stochastic proximal subgradient method}\label{sec:psg}

In this section, we present an SPGM and establish its first-order oracle complexity for solving problem \eqref{ucpb} under heavy-tailed noise. 

The SPGM was originally proposed for solving a special case of \eqref{ucpb} with $H_f = 0$ under the conditions \eqref{asp:class} (see, e.g., \cite{ lan2012optimal,nemirovski2009robust}). We now extend it to address the general problem \eqref{ucpb} in the presence of heavy-tailed noise. In particular, the SPGM generates two sequences, $\{x^k\}$ and $\{z^k\}$. At each iteration $k \ge 0$, SPGM first updates $x^{k+1}$ by performing a stochastic proximal subgradient step. It then computes $z^{k+1}$ as a weighted average of the past iterates $\{x^t\}_{t=1}^{k+1}$. The details of this method are presented in Algorithm~\ref{alg:dq-f}, with specific choices of step sizes provided in Theorem~\ref{thm:cv-psg}.

\begin{algorithm}[!htbp]
\caption{A stochastic proximal subgradient method}
\label{alg:dq-f}
\begin{algorithmic}[0]
\State \textbf{Input:} starting point $x^0\in\mathrm{dom}\,h$, step sizes $\{\eta_k\}\subset(0,\infty)$.
\For{$k=0,1,2,\ldots$}
\State Update the next iterate:
\begin{align}\label{prox-step}
x^{k+1} = \mathrm{prox}_{\eta_k h}(x^k - \eta_k G(x^k;\xi_k)).
\end{align}
\State Compute the weighted average:
\begin{align}\label{weight-average}
z^{k+1} = \Big(\sum_{t=0}^k \eta_t \Big)^{-1}\sum_{t=0}^k \eta_t x^{t+1}.
\end{align}
\EndFor
\end{algorithmic}
\end{algorithm}

The theorem below establishes a complexity bound for Algorithm \ref{alg:dq-f} to compute an $\epsilon$-stochastic optimal solution and an $(\epsilon,\delta)$-stochastic optimal solution of \eqref{ucpb}, respectively. Its proof is deferred to Section \ref{subsec:pf-psg}.


\begin{theorem}\label{thm:cv-psg}
Suppose that Assumption~\ref{asp:basic} holds. Let $\epsilon,\delta\in(0,1)$ be arbitrarily chosen, and let $K$ be a pre-chosen maximum iteration number for running Algorithm \ref{alg:dq-f}. Let $L(\cdot)$, $D_h$, and $(\Lambda(\cdot),\widetilde\Lambda(\cdot,\cdot))$ be defined in \eqref{def:inexact-Lip}, \eqref{def:diam-h}, and \eqref{def:sigma1}, respectively, $L_f,M_f$ be given in Assumption \ref{asp:basic}(a), and let
\begin{align}\label{def:eta_k-psg-c}
\eta = \min\bigg\{\frac{1}{4(L_f + L(\epsilon))},\frac{D_h}{[2K(M_f^2 + \Lambda(\epsilon)^2)]^{1/2}}\bigg\},\quad \tilde{\eta} = \min\bigg\{\frac{1}{4(L_f + L(\epsilon))},\frac{D_h}{[2K(M_f^2 +\widetilde{\Lambda}(\delta,\epsilon)^2)]^{1/2}}\bigg\}.
\end{align}
Then the following statements hold.
\begin{enumerate}[{\rm (i)}]
\item Let $\{z^k\}$ be generated by Algorithm \ref{alg:dq-f} with $\eta_k\equiv \eta$ for all $k\ge0$. Then, $\E[F(z^K) - F^*]\le\epsilon$ for all $K$ satisfying
\begin{align} 
K\ge \max\bigg\{\frac{8D_h^2(L_f + L(\epsilon))}{\epsilon}, \frac{8D_h^2(M_f + \Lambda(\epsilon))^2}{\epsilon^2},1\bigg\}.  \label{K1}
\end{align}
\item Suppose additionally that Assumption \ref{asp:sub-weibull} holds. Let $\{z^k\}$ be generated by Algorithm \ref{alg:dq-f} with $\eta_k\equiv \tilde{\eta}$ for all $k\ge0$. Then, with probability at least $1-\delta$, $F(z^K) - F^*\le\epsilon$ holds for all $K$ satisfying 
\begin{align} 
K\ge \max\bigg\{\frac{8D_h^2(L_f + L(\epsilon))}{\epsilon}, \frac{32D_h^2(M_f+\widetilde{\Lambda}(\delta,\epsilon))^2}{\epsilon^2},\Big(\Big(\frac{4\alpha D_h \sigma}{\epsilon}\Big)^{\frac{\alpha}{\alpha-1}} + \mathbbm{1}_{(1,2)}(\alpha)\Big)\cdot\frac{\ln(2/\delta)}{\alpha-1} ,1\bigg\}. \label{K2}
\end{align}
\end{enumerate}
\end{theorem}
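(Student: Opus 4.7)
Both parts rest on a single one-step recursion. Writing $f'(x^k) := \E[G(x^k;\xi_k)] \in \partial f(x^k)$ and $\delta_k := G(x^k;\xi_k) - f'(x^k)$, I would combine the standard three-point identity for the proximal step (from the optimality condition of $\mathrm{prox}_{\eta_k h}$) with the convexity of $f$ and the descent bound \eqref{ineq:desc_new} at $\varepsilon = \epsilon$ to obtain, for every $u \in \mathrm{dom}\,h$,
\begin{equation*}
2\eta_k[F(x^{k+1}) - F(u)] \le \|x^k - u\|^2 - \|x^{k+1} - u\|^2 - \bigl(1 - \eta_k(L_f + L(\epsilon))\bigr)\|x^{k+1} - x^k\|^2 + 2\eta_k M_f\|x^{k+1} - x^k\| + \tfrac{\eta_k \epsilon}{4} + 2\eta_k\langle \delta_k, u - x^k\rangle + 2\eta_k\langle \delta_k, x^k - x^{k+1}\rangle.
\end{equation*}
Since $\eta_k \le 1/(4(L_f + L(\epsilon)))$, the coefficient of $-\|x^{k+1}-x^k\|^2$ is at least $3/4$, and AM--GM absorbs $2\eta_k M_f\|x^{k+1}-x^k\|$ into $\tfrac14\|x^{k+1}-x^k\|^2 + 4\eta_k^2 M_f^2$.

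\textbf{Heavy-tailed coupling.} The term $\langle \delta_k, u - x^k\rangle$ is a martingale difference given $\mathcal{F}_k$ and vanishes in expectation. The crux is the coupling $\langle \delta_k, x^k - x^{k+1}\rangle$: because $\delta_k$ has only a finite $\alpha$-th moment, Cauchy--Schwarz (as in the $\alpha=2$ analysis of \cite{lan2012optimal}) fails. Instead I would apply Young's inequality with the H\"older pair $(\alpha/(\alpha-1),\alpha)$, then use $\|x^{k+1}-x^k\| \le D_h$ together with $\alpha/(\alpha-1) \ge 2$ to bound $\|x^{k+1}-x^k\|^{\alpha/(\alpha-1)} \le D_h^{(2-\alpha)/(\alpha-1)}\|x^{k+1}-x^k\|^2$, and tune the scaling parameter so that the $\|x^{k+1}-x^k\|^2$ coefficient is exactly $1/4$ --- which together with the $M_f$ bookkeeping above fully consumes the $3/4\|x^{k+1}-x^k\|^2$ budget. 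The residual is a term of the form $c_\alpha D_h^{2-\alpha}\eta_k^\alpha \|\delta_k\|^\alpha$, with $c_\alpha$ calibrated so that $c_\alpha D_h^{2-\alpha}\sigma^\alpha$ reproduces exactly the constant inside $\Lambda(\epsilon)^2$ in \eqref{def:sigma1}. Summing the one-step inequality with $u = x^*$ for $k = 0, \ldots, K-1$, telescoping, using $\|x^0-x^*\|^2 \le D_h^2$, applying Jensen's inequality to bring out $F(z^K)$, and taking expectations (so $\E\|\delta_k\|^\alpha \le \sigma^\alpha$) produces an error bound of the shape $D_h^2/(2K\eta) + \epsilon/8 + 2\eta M_f^2 + \tfrac12 c_\alpha D_h^{2-\alpha}\eta^{\alpha-1}\sigma^\alpha$. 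Plugging in $\eta$ from \eqref{def:eta_k-psg-c} and forcing each summand to be at most $\epsilon/4$ reproduces \eqref{K1}, proving part (i).

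\textbf{High probability and main obstacle.} For part (ii) I would keep the same recursion but control the two stochastic contributions pathwise. Under Assumption \ref{asp:sub-weibull}, the increments $2\eta \langle \delta_k, x^* - x^k\rangle$ form a sub-Weibull martingale-difference sequence with scale $\lesssim \eta D_h\sigma$, so a Freedman/Bernstein-type deviation inequality for sub-Weibull martingales yields a tail of order $\sqrt{K}\eta D_h\sigma(1 + \ln(2/\delta))^{1/\alpha}$ with probability $1-\delta/2$ --- matching the scaling hidden in $\widetilde\Lambda(\delta,\epsilon)$. The non-centered sum $\sum_k \|\delta_k\|^\alpha$ concentrates near $K\sigma^\alpha$ via a sub-Weibull Chernoff/Markov bound, and its deviation produces the extra $(4\alpha D_h\sigma/\epsilon)^{\alpha/(\alpha-1)}\ln(2/\delta)/(\alpha-1)$ iteration requirement in \eqref{K2}. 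Balancing these four contributions against $\epsilon$ with $\tilde\eta$ from \eqref{def:eta_k-psg-c} gives \eqref{K2}. The main obstacle is the careful bookkeeping in the Young's-inequality step so that the noise budget lands on $\Lambda(\epsilon)^2$ (and $\widetilde\Lambda(\delta,\epsilon)^2$) exactly, and, in part (ii), invoking a sub-Weibull martingale concentration that keeps the $\ln(1/\delta)$ factor inside the standard deviation at the sharp exponent $1/\alpha$ --- precisely the improved high-probability dependence advertised in the introduction.
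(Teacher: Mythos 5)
Your one-step recursion and your treatment of part (i) follow essentially the same route as the paper: optimality condition of the prox step, convexity plus \eqref{ineq:desc_new}, absorption of the $M_f$ cross term by AM--GM, and Young's inequality with the pair $(\alpha/(\alpha-1),\alpha)$ combined with $\|x^{k+1}-x^k\|^{\alpha/(\alpha-1)}\le D_h^{\alpha/(\alpha-1)-2}\|x^{k+1}-x^k\|^2$ to handle the noise coupling; then telescoping, Jensen, and balancing. One small point you gloss over: the residual noise term is $c_\alpha D_h^{2-\alpha}\sigma^\alpha\eta^{\alpha-1}$, which is \emph{not} linear in $\eta$, so it cannot literally "reproduce the constant inside $\Lambda(\epsilon)^2$" (note $\Lambda(\epsilon)^2$ involves $\sigma^{\alpha/(\alpha-1)}$, not $\sigma^\alpha$). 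The paper inserts a second Young's inequality (its Lemma~\ref{lem:tech-alp}) to convert $c\sigma^\alpha\eta^{\alpha-1}\le \Lambda(\epsilon)^2\eta+\epsilon/8$, and only then does the $\min_{\hat\eta}\{a/\hat\eta+b\hat\eta\}$ balancing with the step size \eqref{def:eta_k-psg-c} go through. This is recoverable from your plan but is a real step, not mere bookkeeping.

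For part (ii) there is a concrete gap. First, you have the two stochastic contributions attributed backwards: in the paper it is the Markov bound on $\frac1K\sum_k\delta_k^\alpha\le(1+\ln(2/\delta))\sigma^\alpha$ that inflates $\Lambda(\epsilon)$ to $\widetilde\Lambda(\delta,\epsilon)$ (the second term of \eqref{K2}), while the martingale sum $\sum_k\Delta_k$ is what produces the third term $\big((4\alpha D_h\sigma/\epsilon)^{\alpha/(\alpha-1)}+\mathbbm{1}_{(1,2)}(\alpha)\big)\ln(2/\delta)/(\alpha-1)$. Second, and more importantly, you invoke an unspecified "Freedman/Bernstein-type sub-Weibull martingale inequality" with deviation $\sqrt{K}\,\eta D_h\sigma(1+\ln(2/\delta))^{1/\alpha}$; this scaling is not what the argument needs and does not reproduce \eqref{K2}. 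The paper instead proves its own concentration result (Lemma~\ref{lem:mds}, generalizing Lan--Nemirovski--Shapiro) via the mgf bound $\E[\exp\{\tau\phi^k\}\,|\,\xi_{[k-1]}]\le\exp\{\max\{(\tau\varsigma)^\alpha,(\tau\varsigma)^{\alpha/(\alpha-1)}\}\}$, yielding a deviation of order $K^{1/\alpha}\varsigma(\ln(2/\delta)/(\alpha-1))^{(\alpha-1)/\alpha}$ that is valid only for $K\gtrsim\mathbbm{1}_{(1,2)}(\alpha)\ln(2/\delta)/(\alpha-1)$ --- and this validity restriction is precisely where the $\mathbbm{1}_{(1,2)}(\alpha)\ln(2/\delta)/(\alpha-1)$ piece of \eqref{K2} originates. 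Your plan neither supplies such a lemma nor accounts for that restriction, so as written part (ii) does not close.
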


\begin{remark}
From Theorem~\ref{thm:cv-psg} and \eqref{def:sigma1}, we see that Algorithm~\ref{alg:dq-f} achieves a first-order oracle complexity of 
\begin{align*}
\mathcal{O}\Big(\frac{L_f + L(\epsilon)}{\epsilon} + \Big(\frac{M_f+\Lambda(\epsilon)}{\epsilon}\Big)^2\Big)\ \ \text{and}\ \ \mathcal{O}\Big(\frac{L_f + L(\epsilon)}{\epsilon} + \Big(\frac{M_f+\widetilde{\Lambda}(\delta,\epsilon)}{\epsilon}\Big)^2\Big)
\end{align*}
for finding  an $\epsilon$-stochastic optimal solution and an $(\epsilon,\delta)$-stochastic optimal solution of \eqref{ucpb}, respectively. Further, in view of the definitions of $L(\cdot)$ and $(\Lambda(\cdot), \widetilde\Lambda(\cdot, \cdot))$ in \eqref{def:inexact-Lip} and \eqref{def:sigma1}, these bounds reduce to \eqref{cplx-1}, which achieves the optimal dependence on $\epsilon$ for nonsmooth convex problems \cite{nemirovski1983problem} and for stochastic convex optimization under heavy-tailed noise \cite{fatkhullin2025can,liu2023stochastic}. However, for smooth and weakly smooth convex problems, the above bounds are not optimal.
\end{remark}

\section{An accelerated stochastic proximal subgradient method}\label{sec:apsg} 

In this section, we present an accelerated SPGM and show that it achieves a universally optimal first-order oracle complexity for solving smooth, weakly smooth, and nonsmooth convex problems, as well as stochastic convex problems under heavy-tailed noise.

\begin{algorithm}[!htbp]
\caption{An accelerated stochastic proximal subgradient method}
\label{alg:ac-dq-f}
\begin{algorithmic}[0]
\State \textbf{Input:} starting point $x^0=z^0\in\mathrm{dom}\,h$, step sizes $\{\eta_k\}\subset(0,\infty)$, weighting parameters $\{\gamma_k\}\subset(0,1]$.
\For{$k=0,1,2,\ldots$}
\State Compute the intermediate point:
\begin{align}\label{mid-update}
y^k = (1-\gamma_k)z^k + \gamma_k x^k    
\end{align}
\State Update the next iterate:
\begin{align}\label{a-prox-step}
x^{k+1} = \mathrm{prox}_{\eta_k h}(x^k - \eta_k G(y^k;\xi_k)).
\end{align}
\State Compute the weighted average:
\begin{align}\label{ag-update}
z^{k+1} = (1-\gamma_k)z^k + \gamma_k x^{k+1}.
\end{align}
\EndFor
\end{algorithmic}
\end{algorithm}

The accelerated SPGM was originally proposed in \cite{lan2012optimal} for solving a special case of problem~\eqref{ucpb} with $H_f = 0$ under conditions~\eqref{asp:class}. We now extend it to handle the general problem~\eqref{ucpb} under heavy-tailed noise. The accelerated SPGM can be viewed as a stochastic variant of Nesterov’s accelerated proximal gradient method~\cite[Algorithm~1]{Tse08}, obtained by replacing the deterministic gradient with a stochastic subgradient. Specifically, the method generates three sequences, $\{x^k\}$, $\{y^k\}$, and $\{z^k\}$. The sequence $\{x^k\}$ represents the main iterates, updated via a proximal operator. The sequence $\{z^k\}$ denotes the aggregated iterates, where each $z^k$ is a weighted average of $\{x^t\}_{t=0}^k$. 
The sequence $\{y^k\}$ serves as an intermediate sequence, with each $y^k$ computed as an average of $x^k$ and $z^k$. The complete description of the method is given in Algorithm~\ref{alg:ac-dq-f}, and the specific choices of step sizes are provided in Theorem~\ref{thm:cv-apsg}.

The next theorem establishes a complexity bound for Algorithm \ref{alg:ac-dq-f} to compute an $\epsilon$-stochastic optimal solution and an $(\epsilon,\delta)$-stochastic optimal solution of \eqref{ucpb}, respectively. Its proof is deferred to Section \ref{subsec:pf-apsg}.

\begin{theorem}\label{thm:cv-apsg}
Suppose that Assumption~\ref{asp:basic} holds.  
Let $\epsilon,\delta\in(0,1)$ be arbitrarily chosen, and let $K$ be a pre-chosen maximum iteration number for running  Algorithm \ref{alg:ac-dq-f}. Let $L(\cdot)$, $D_h$, and $(\Lambda(\cdot),\widetilde\Lambda(\cdot,\cdot))$ be defined in \eqref{def:inexact-Lip}, \eqref{def:diam-h}, and \eqref{def:sigma1}, respectively, $L_f,M_f$ be given in Assumption \ref{asp:basic}(a), and let
\begin{align}
\eta = \min\bigg\{\frac{1}{4(L_f + L(\epsilon/K))},\bigg(\frac{6}{(M_f^2 + \Lambda(\epsilon)^2)(2K+3)(K+2)K}\bigg)^{\frac{1}{2}}D_h\bigg\},\label{def:eta-apsg-c}\\ 
\tilde{\eta} = \min\bigg\{\frac{1}{4(L_f + L(\epsilon/K))},\bigg(\frac{2}{(M_f^2 + \widetilde\Lambda(\delta,\epsilon)^2)(K+2)^2K}\bigg)^{\frac{1}{2}}D_h\bigg\}.\label{def:eta-apsg-c-1}
\end{align}
Then the following statements hold.
\begin{enumerate}[{\rm (i)}]
\item Let $\{z^k\}$ be generated by Algorithm \ref{alg:ac-dq-f} with $(\gamma_k,\eta_k)=(2/(k+2), (k+2)\eta/2)$ for all $k\ge0$. Then, $\E[F(z^K) - F^*]\le\epsilon$ for all $K$ satisfying
\begin{align}
K\ge \max\bigg\{\Big(\frac{48D_h^2L_f}{\epsilon}\Big)^\frac{1}{2},\Big(\frac{48D_h^2L(\epsilon)}{\epsilon}\Big)^{\frac{1+\nu}{1+3\nu}}, \frac{(24D_h)^2(M_f+\Lambda(\epsilon))^2}{3\epsilon^2},2\bigg\}. \label{K3}
\end{align}
\item Suppose additionally that Assumption \ref{asp:sub-weibull} holds. Let $\{z^k\}$ be generated by Algorithm \ref{alg:ac-dq-f} with $(\gamma_k,\eta_k)=(2/(k+2), (k+2)\tilde{\eta}/2)$ for all $k\ge0$. Then, with probability at least $1-\delta$, $F(z^K) - F^*\le\epsilon$ holds for all $K$ satisfying
\begin{align}
K\ge \max\bigg\{\Big(\frac{64D_h^2L_f}{\epsilon}\Big)^{\frac{1}{2}},\Big(\frac{64D_h^2L(\epsilon)}{\epsilon}\Big)^{\frac{1+\nu}{1+3\nu}}, \frac{2(16D_h)^2(M_f+\widetilde\Lambda(\delta,\epsilon))^2}{\epsilon^2},&\nonumber \\
\Big(\Big(\frac{16\alpha D_h \sigma}{\epsilon}\Big)^{\frac{\alpha}{\alpha-1}}+\mathbbm{1}_{(1,2)}(\alpha)\Big)\cdot\frac{\ln(2/\delta)}{\alpha-1} ,2\bigg\}.& \label{K4}
\end{align}
\end{enumerate}
\end{theorem}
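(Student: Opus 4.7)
My plan is to adapt the standard analysis of accelerated proximal gradient methods (cf.\ \cite{lan2012optimal}) to simultaneously accommodate the hybrid smoothness \eqref{cond:hybd} and heavy-tailed noise. First I would apply the inexact descent inequality \eqref{ineq:desc_new} with $\varepsilon = \epsilon/K$ at $(y^k, z^{k+1})$; since \eqref{mid-update}--\eqref{ag-update} give $z^{k+1} - y^k = \gamma_k(x^{k+1} - x^k)$, this yields a bound on $f(z^{k+1})$ involving $\gamma_k\langle f'(y^k), x^{k+1}-x^k\rangle$, a quadratic $\tfrac{\gamma_k^2 \bar L}{2}\|x^{k+1}-x^k\|^2$ with $\bar L := L_f + L(\epsilon/K)$, a linear $\gamma_k M_f$ term, and a bias $\epsilon/(8K)$. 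Combining with the convexity identity $f(y^k) \le (1-\gamma_k)f(z^k) + \gamma_k f(x^*) + \gamma_k\langle f'(y^k), x^k - x^*\rangle$ (which exploits $y^k - (1-\gamma_k)z^k - \gamma_k x^* = \gamma_k(x^k - x^*)$), convexity of $h$ giving $h(z^{k+1}) \le (1-\gamma_k)h(z^k) + \gamma_k h(x^{k+1})$, and the three-point inequality from the prox step \eqref{a-prox-step} applied at an optimizer $x^*$ with $G(y^k;\xi_k) = f'(y^k) + \delta^k$, I obtain
\begin{align*}
F(z^{k+1}) - F^* &\le (1-\gamma_k)[F(z^k) - F^*] + \tfrac{\gamma_k}{2\eta_k}\bigl(\|x^k-x^*\|^2 - \|x^{k+1}-x^*\|^2\bigr) \\
&\quad - \tfrac{\gamma_k(1 - \gamma_k\eta_k\bar L)}{2\eta_k}\|x^{k+1}-x^k\|^2 + \gamma_k M_f\|x^{k+1}-x^k\| - \gamma_k\langle \delta^k, x^{k+1}-x^*\rangle + \tfrac{\epsilon}{8K}.
\end{align*}
Since the step-size choices \eqref{def:eta-apsg-c} and \eqref{def:eta-apsg-c-1} satisfy $\gamma_k\eta_k \equiv \eta$ (resp.\ $\tilde{\eta}$) with $\eta \le 1/(4\bar L)$, the coefficient of $\|x^{k+1}-x^k\|^2$ is at most $-3\gamma_k/(8\eta_k)$ and acts as a sink for the remaining positive contributions.

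Next I would multiply the recursion by the weights $W_k = (k+1)(k+2)/2$, which satisfy $W_k(1-\gamma_k) = W_{k-1}$ for $k \ge 1$ (with $\gamma_0 = 1$ killing the initial term), and telescope from $k=0$ to $K-1$ to obtain $W_{K-1}(F(z^K) - F^*) \le \sum_{k=0}^{K-1} W_k\cdot[\text{residuals}]$. The Abel sum of the distance terms is bounded by $D_h^2/\eta$ because $a_k := W_k\gamma_k/(2\eta_k) = (k+1)/((k+2)\eta)$ is increasing and upper-bounded by $1/\eta$, while $\|x^k-x^*\| \le D_h$. The $M_f$ term is controlled by Young's inequality ($p=q=2$), whose quadratic half is absorbed into the sink, leaving a residual of order $W_k\gamma_k\eta_k M_f^2$; summed and divided by $W_{K-1} = \Theta(K^2)$, this gives a contribution of order $K\eta M_f^2$.

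The principal obstacle is taming the noise term $-W_k\gamma_k\langle \delta^k, x^{k+1}-x^*\rangle$ under the $\alpha$-moment assumption with $\alpha<2$, since $\|\delta^k\|$ may have infinite variance. I would decompose $\langle \delta^k, x^{k+1}-x^*\rangle = \langle \delta^k, x^k-x^*\rangle + \langle \delta^k, x^{k+1}-x^k\rangle$. The first summand is a martingale difference bounded pointwise by $D_h\|\delta^k\|$, which vanishes after expectation in part (i) and in part (ii) is handled by a Freedman-type concentration inequality for sub-Weibull increments afforded by Assumption~\ref{asp:sub-weibull}. For the second summand I would apply Young's inequality with conjugate exponents $\alpha/(\alpha-1)$ and $\alpha$, and then use the bound $\|x^{k+1}-x^k\|^{\alpha/(\alpha-1)} \le D_h^{(2-\alpha)/(\alpha-1)}\|x^{k+1}-x^k\|^2$ (valid because $\alpha/(\alpha-1) \ge 2$ and $\|x^{k+1}-x^k\| \le D_h$) so that the resulting quadratic is absorbable into the sink. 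Tuning the Young parameter so that the quadratic coefficient matches half of $3\gamma_k/(8\eta_k)$ leaves a deterministic residual involving $\gamma_k\eta_k^{\alpha-1}\|\delta^k\|^\alpha$ together with appropriate powers of $D_h$ and $\alpha$; taking expectation, using $\E\|\delta^k\|^\alpha \le \sigma^\alpha$, summing over $k$, and dividing by $W_{K-1}$ produces exactly the $\Lambda(\epsilon)^2$ quantity in \eqref{def:sigma1}. Balancing the three resulting constraints $\eta \le 1/(4\bar L)$, $D_h^2/(\eta K^2) \le \epsilon$, and $K\eta(M_f^2 + \Lambda(\epsilon)^2) \lesssim \epsilon$ then reproduces \eqref{K3}. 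For part (ii), Freedman's inequality on the martingale piece plus a Markov/Chernoff bound applied to $\sum W_k\gamma_k\eta_k^{\alpha-1}\|\delta^k\|^\alpha$ under the sub-Weibull tails of Assumption~\ref{asp:sub-weibull} introduce the extra logarithmic factor $\ln(2/\delta)^{1/\alpha}$ encoded in $\widetilde\Lambda(\delta,\epsilon)$, as well as the final concentration term $((16\alpha D_h\sigma/\epsilon)^{\alpha/(\alpha-1)} + \mathbbm{1}_{(1,2)}(\alpha))\cdot\ln(2/\delta)/(\alpha-1)$ appearing in \eqref{K4}.
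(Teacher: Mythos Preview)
Your proposal is correct and follows essentially the same approach as the paper: the paper packages the per-iteration inequality (inexact descent \eqref{ineq:desc_new} at $(y^k,z^{k+1})$, convexity, the three-point prox identity, Young with exponents $(\alpha',\alpha)$ together with $\|x^{k+1}-x^k\|^{\alpha'}\le D_h^{\alpha'-2}\|x^{k+1}-x^k\|^2$) into Lemma~\ref{lem:rec-apsg}, telescopes via the scaled weights $\eta_k\gamma_k^{-1}$ (so the distance terms collapse directly rather than via your Abel sum), and then isolates the conversion $\sigma^\alpha\eta_k^{\alpha-1}\lesssim\Lambda(\epsilon)^2\eta_k+\epsilon/8$ as Lemma~\ref{lem:tech-alp} and the sub-Weibull martingale concentration as Lemma~\ref{lem:mds}. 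The only cosmetic difference is that the paper applies the descent inequality with $\varepsilon=\epsilon\gamma_k$ rather than your uniform $\varepsilon=\epsilon/K$, which it then reduces to the same bound via monotonicity of $L(\cdot)$.
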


\begin{remark}
From Theorem~\ref{thm:cv-apsg} and \eqref{def:sigma1}, we see that Algorithm~\ref{alg:ac-dq-f} achieves a first-order oracle complexity of 
\begin{align*}
\mathcal{O}\Big(\Big(\frac{L_f}{\epsilon}\Big)^{\frac{1}{2}} + \Big(\frac{L(\epsilon)}{\epsilon}\Big)^{\frac{1+\nu}{1+3\nu}} + \Big(\frac{M_f+\Lambda(\epsilon)}{\epsilon}\Big)^2\Big)\ \ \text{and}\ \ \mathcal{O}\Big(\Big(\frac{L_f}{\epsilon}\Big)^{\frac{1}{2}} +\Big(\frac{L(\epsilon)}{\epsilon}\Big)^{\frac{1+\nu}{1+3\nu}} + \Big(\frac{M_f+\widetilde{\Lambda}(\delta,\epsilon)}{\epsilon}\Big)^2\Big)
\end{align*}
for finding an $\epsilon$-stochastic optimal solution and an $(\epsilon,\delta)$-stochastic optimal solution of \eqref{ucpb}, respectively. Further, by virtue of the definitions of $L(\cdot)$ and $(\Lambda(\cdot),\widetilde\Lambda(\cdot,\cdot))$ in \eqref{def:inexact-Lip} and \eqref{def:sigma1}, these bounds reduce to the ones in  \eqref{cplx-a2}, which match the universal optimal bound for smooth, weakly smooth, and nonsmooth convex problems \cite{ghadimi2019generalized,nemirovski1983problem,nesterov2015universal}, as well as for stochastic convex optimization with heavy-tailed noise \cite{fatkhullin2025can,liu2023stochastic}. 
Moreover, for a special case of problem \eqref{ucpb} with $H_f=0$,  the complexity bounds in  \eqref{cplx-a2} with $\alpha=2$ have an improved dependence on $\log(1/\delta)$ compared to the ones in \eqref{cplx-old} obtained in \cite{lan2012optimal}.
\end{remark}

\section{Numerical experiments} \label{sec:experiment} 

In this section, we present preliminary numerical experiments to evaluate the performance of Algorithms~\ref{alg:dq-f} and~\ref{alg:ac-dq-f}, where Algorithm~\ref{alg:ac-dq-f} is referred to as SPGM-A. We also compare these methods with the clipped version of SPGM, denoted as SPGM-C. All algorithms are implemented in \textsc{Matlab}, and all computations are conducted on a laptop equipped with an Intel Core i9-14900HX processor (2.20~GHz) and 32~GB of RAM.

\subsection{$\ell_1$-regularized $\ell_2$-$\ell_p$ regression with box constraints}

In this subsection, we consider the $\ell_1$-regularized $\ell_2$-$\ell_p$ regression with box constraints:
\begin{align}\label{pb:lp-l1r}
\min_{l\le x\le u} \frac{1}{2}\|Ax-b\|^2 + \frac{1}{p}\|Ax-b\|_p^p + \lambda \|x\|_1,   
\end{align}
where $A \in \mathbb{R}^{n \times n}$, $b \in \mathbb{R}^n$, $u=-l=-100\cdot\mathbf{1}$ with $\mathbf{1}$ being the all-ones vector, $p = 1.5$, and $\lambda = 1$. We simulate noisy gradient evaluations by setting the stochastic gradient estimator as $G(x;\xi)=\nabla f(x) + \rho\xi$, where $\rho>0$ is a deterministic scalar, and $\xi\in\R^n$ has independently distributed coordinates, each following a heavy-tailed distribution with density function $p(t) = \omega/(2(1 + |t|)^{1+\omega})$. One can verify that such $G(\cdot;\xi)$ satisfies Assumption \ref{asp:basic}(c) for every $\alpha\in(1,\omega)$, and that the $\alpha$th central moment of $G(\cdot;\xi)$ is unbounded for all $\alpha\ge\omega$.

For each triple $(n,\rho,\omega)$, we randomly generate 10 instances of problem~\eqref{pb:lp-l1r}. In particular, we first randomly generate $A$ with all its elements sampled from the standard normal distribution. We then randomly generate $\Bar{x}^*$, with all its components sampled from the standard normal distribution, and construct a sparse solution $x^*$ by randomly setting half of its components to zero. Finally, we set $b=Ax^*$.

We apply SPGM, SPGM-A, and SPGM-C to problem \eqref{pb:lp-l1r} to find an approximate solution $x^k$ such that its relative objective value gap $(F(x^k) - F^*)/(F(x^0) - F^*)$ is less than $10^{-4}$, where $F^*$ is estimated by CVX \cite{grant2008cvx}. All methods are initialized at the zero vector. Other algorithmic parameters are selected to suit each method well in terms of computational performance.

\begin{table}[tbhp]
\centering
\begin{tabular}{ccc||rrr||rrr}
\hline
& & &\multicolumn{3}{c||}{CPU time (seconds)} &\multicolumn{3}{c}{Iterations} \\
$n$ & $\rho$ & $\omega$ & SPGM & SPGM-A & SPGM-C & SPGM & SPGM-A & SPGM-C \\ \hline
500 & 1 & 1.8 & 3.15 & 1.50 & 3.16 & 2602.2 & 1284.0 & 2606.8 \\ 
500 & 1 & 1.5 & 3.54 & 1.65 & 3.68 & 2734.8 & 1280.3 & 2727.6 \\ 
500 & 1 & 1.2 & 3.53 & 1.73 & 3.76 & 2810.5 & 1311.6 & 2825.6 \\ 
500 & 100 & 1.8 & 3.28 & 1.57 & 3.24 & 2528.1 & 1233.8 & 2520.5 \\ 
500 & 100 & 1.5 & 4.38 & 2.22 & 3.90 & 3226.5 & 1711.5 & 2689.3 \\ 
500 & 100 & 1.2 & 4.65 & 3.16 & 3.22 & 4158.1 & 2740.8 & 2872.0 \\ 
1000 & 1 & 1.8 & 18.82 & 8.11 & 19.40 & 5006.9 & 2130.6 & 5040.6 \\ 
1000 & 1 & 1.5 & 20.97 & 8.99 & 21.69 & 4952.5 & 2149.6 & 5059.7 \\ 
1000 & 1 & 1.2 & 21.14 & 8.77 & 22.52 & 5085.2 & 2175.4 & 5322.0 \\ 
1000 & 100 & 1.8 & 20.90 & 8.95 & 21.68 & 5154.1 & 2174.9 & 5355.7 \\ 
1000 & 100 & 1.5 & 24.09 & 9.32 & 22.72 & 5695.5 & 2167.0 & 5048.7\\ 
1000 & 100 & 1.2 & 24.85 & 19.08 & 22.41 & 6779.0 & 5165.0 & 6106.3 \\
\hline 
\end{tabular}
\caption{Numerical results for problem~\eqref{pb:lp-l1r}.}
\label{table:pb1}
\end{table}

The computational results of SPGM, SPGM-A, and SPGM-C for solving \eqref{pb:lp-l1r} are presented in Table~\ref{table:pb1}. Specifically, the first three columns list the values of $n$, $\rho$, and $\omega$, respectively, while the remaining columns report the average CPU time and the average number of iterations for each triple $(n,\rho,\omega)$. We observe that, except for the case $(\rho,\omega) = (100,1.2)$, SPGM-A substantially outperforms both SPGM and SPGM-C. When $(\rho,\omega) = (100,1.2)$, the performance gap between SPGM-A and SPGM-C becomes much smaller, although both methods still outperform SPGM. These observations suggest that when the noise level is not too high, the accelerated SPGM can achieve significantly faster convergence than the vanilla SPGM and the clipped SPGM, which is consistent with our theoretical findings.

\subsection{$\ell_2$-$\ell_p$-$\ell_1$ regression with $\ell_2$-ball constraint}
In this subsection, we consider the $\ell_2$-$\ell_p$-$\ell_1$ regression with $\ell_2$-ball constraint:
\begin{align}\label{pb:lp-l1-l2}
\min_{\|x\|\le u} \frac{1}{2}\|Ax-b\|^2 +\frac{1}{p}\|Ax-b\|_p^p + \lambda \|Ax-b\|_1,   
\end{align}
where $A\in\R^{n\times n}$, $b\in\R^n$, $u=100$, $p=1.5$, and $\lambda=0.1$. We simulate the noisy gradient evaluations by setting the stochastic gradient estimator as $G(x;\xi)=\nabla f(x) + \rho\xi$, where $\rho>0$ is a deterministic scalar, and $\xi\in\R^n$ has independently distributed coordinates, each following a heavy-tailed distribution with density function $p(t) = \omega/(2(1 + |t|)^{1+\omega})$. One can verify that such $G(\cdot;\xi)$ satisfies Assumption \ref{asp:basic}(c) for every $\alpha\in(1,\omega)$, and that the $\alpha$th central moment of $G(\cdot;\xi)$ is unbounded for all $\alpha\ge\omega$.

For each triple $(n,\rho,\omega)$, we randomly generate 10 instances of problem~\eqref{pb:lp-l1-l2}. In particular, we first randomly generate $A$ with all its elements sampled from the standard normal distribution. We then randomly generate $x^*$, with all its components sampled from the standard normal distribution, and set $b=Ax^*$.

We apply SPGM, SPGM-A, and SPGM-C to problem \eqref{pb:lp-l1-l2} to find an approximate solution $x^k$ such that its relative objective value gap $F(x^k)/F(x^0)$ is less than $10^{-4}$ (note that $F^*=0$ due to the data generation setup). All methods are initialized at the zero vector. Other algorithmic parameters are selected to suit each method well in terms of computational performance.

\begin{table}[tbhp]
\centering
\begin{tabular}{ccc||rrr||rrr}
\hline
& & &\multicolumn{3}{c||}{CPU time (seconds)} &\multicolumn{3}{c}{Iterations} \\
$n$ & $\rho$ & $\omega$ & SPGM & SPGM-A & SPGM-C & SPGM & SPGM-A & SPGM-C \\ \hline
500 & 1 & 1.8 & 18.00 & 9.65 & 26.80 & 9772.6 & 3640.0 & 9374.6 \\ 
500 & 1 & 1.5 & 12.10 & 4.11 & 11.76 & 9126.5 & 3167.2 & 8659.1 \\ 
500 & 1 & 1.2 & 12.02 & 3.95 & 11.85 & 8838.4 & 2936.4 & 8319.8 \\ 
500 & 100 & 1.8 & 13.30 & 5.06 & 13.42 & 9671.7 & 3594.2 & 9411.8 \\ 
500 & 100 & 1.5 & 16.32 & 5.19 & 13.11 & 12190.4 & 3828.9 & 9640.9 \\ 
500 & 100 & 1.2 & 21.36 & 12.54 & 13.00 & 15843.0 & 9051.7 & 9730.7 \\ 
1000 & 1 & 1.8 & 48.61 & 18.62 & 59.23 & 8524.6 & 2769.9 & 8512.5 \\ 
1000 & 1 & 1.5 & 39.42 &  13.03 & 39.54 & 8305.2 & 2753.9 & 8300.3 \\ 
1000 & 1 & 1.2 & 50.64 & 12.66 & 40.22 & 10454.3 & 2700.5 & 8386.8 \\ 
1000 & 100 & 1.8 & 40.62 & 13.83 & 43.13 & 8126.3 & 2638.7 & 8119.6 \\ 
1000 & 100 & 1.5 & 82.54 & 24.76 & 77.66 & 10729.2 & 2871.3 & 9076.5 \\ 
1000 & 100 & 1.2 & 124.10 & 52.07 & 80.57 & 17992.7 & 8854.2 & 9496.5 \\
\hline 
\end{tabular}
\caption{Numerical results for problem~\eqref{pb:lp-l1-l2}.}
\label{table:pb2}
\end{table}

The computational results of SPGM, SPGM-A, and SPGM-C for solving \eqref{pb:lp-l1-l2} are presented in Table~\ref{table:pb2}. The first three columns list the values of $n$, $\rho$, and $\omega$, respectively, while the remaining columns report the average CPU time and the average number of iterations for each triple $(n,\rho,\omega)$. We observe that, except for the case $(\rho,\omega) = (100,1.2)$, SPGM-A significantly outperforms both SPGM and SPGM-C. When $(\rho,\omega) = (100,1.2)$, the performance gap between SPGM-A and SPGM-C narrows, although both methods still outperform SPGM. These observations suggest that when the noise level is moderate, the accelerated SPGM can achieve substantially faster convergence than both the vanilla and clipped versions of SPGM, which aligns well with our theoretical results.

\section{Proof of the main results} \label{sec:proof}

In this section, we present the proofs of the main results stated in Sections~\ref{sec:psg} and~\ref{sec:apsg}, namely, Theorems~\ref{thm:cv-psg} and~\ref{thm:cv-apsg}. Throughout this section, let $x^*$ denote an arbitrary but fixed optimal solution to \eqref{ucpb}.

Before proceeding, we establish several technical lemmas below. The following lemma provides a useful inequality for handling the heavy-tailed noise condition.

\begin{lemma}\label{lem:tech-alp}
For any $\alpha\in(1,2]$, it holds that 
\begin{align}\label{ineq:tech-alp}
c \sigma^\alpha \hat{\eta}^{\alpha-1} \le (\alpha-1) c^{\frac{1}{\alpha-1}}\Big(\frac{8}{\varepsilon}\Big)^{\frac{2-\alpha}{\alpha-1}} \sigma^{\frac{\alpha}{\alpha-1}} \hat{\eta} + \frac{\varepsilon}{8} \qquad \forall c,\sigma,\hat{\eta},\varepsilon>0.
\end{align}
\end{lemma}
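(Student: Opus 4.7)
The plan is to recognize the inequality as an instance of Young's inequality (with a scaling parameter) and to choose that parameter so that the second term matches $\varepsilon/8$.

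I would first dispose of the boundary case $\alpha=2$. Here the exponent $(2-\alpha)/(\alpha-1)$ is $0$, so $(8/\varepsilon)^{(2-\alpha)/(\alpha-1)}=1$, and the claimed right-hand side is $c\sigma^2\hat{\eta}+\varepsilon/8$, which trivially dominates $c\sigma^2\hat{\eta}$.

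For $\alpha\in(1,2)$, set $p:=1/(\alpha-1)$ and $q:=1/(2-\alpha)$. These are conjugate exponents since $(\alpha-1)+(2-\alpha)=1$, and both strictly exceed $1$. Introduce an auxiliary scale $\tau>0$ and decompose
$$c\sigma^\alpha\hat{\eta}^{\alpha-1}=\bigl(\tau\,\hat{\eta}^{\alpha-1}\bigr)\cdot\bigl(c\sigma^\alpha/\tau\bigr).$$
Applying Young's inequality $ab\le a^p/p+b^q/q$ to this product and simplifying the exponents yields
$$c\sigma^\alpha\hat{\eta}^{\alpha-1}\le(\alpha-1)\,\tau^{1/(\alpha-1)}\hat{\eta}+(2-\alpha)\bigl(c\sigma^\alpha/\tau\bigr)^{1/(2-\alpha)}.$$

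The remaining step is to pick $\tau$ to balance the two sides. Choosing $\tau=c\sigma^\alpha(8/\varepsilon)^{2-\alpha}$ makes the second term equal to $(2-\alpha)\varepsilon/8\le\varepsilon/8$. Substituting this $\tau$ back into the first term and collecting exponents gives precisely $(\alpha-1)c^{1/(\alpha-1)}(8/\varepsilon)^{(2-\alpha)/(\alpha-1)}\sigma^{\alpha/(\alpha-1)}\hat{\eta}$, which is the coefficient in the stated bound. The only place requiring care is the bookkeeping of exponents after the substitution; there is no genuine obstacle, as the lemma is essentially a one-line application of Young's inequality with a judicious scaling.
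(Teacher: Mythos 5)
Your proof is correct and follows essentially the same route as the paper: both handle $\alpha=2$ trivially and, for $\alpha\in(1,2)$, apply Young's inequality with conjugate exponents $1/(\alpha-1)$ and $1/(2-\alpha)$ to a rescaled splitting of $c\sigma^\alpha\hat{\eta}^{\alpha-1}$, tuning the free scale so the second term is at most $\varepsilon/8$. Your particular choice $\tau=c\sigma^\alpha(8/\varepsilon)^{2-\alpha}$ is, if anything, marginally cleaner than the paper's (it lands exactly on the stated coefficient with no need for the auxiliary bound $(2-\alpha)^{2-\alpha}\le 1$).
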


\begin{proof}
When $\alpha=2$, this inequality holds trivially. We next prove \eqref{ineq:tech-alp} for the case when $\alpha \in(1,2)$. By Young's inequality, one has that $\tau s\le \tau^p/p + s^q/q$ holds for all $\tau, s>0$ and $p,q\ge1$ satisfying $1/p+1/q=1$. Letting $\tau=c \sigma^\alpha \hat{\eta}^{\alpha-1}/s$, $p=1/(\alpha-1)$, and $q=1/(2-\alpha)$, we obtain that 
\begin{align*}
c \sigma^\alpha \hat{\eta}^{\alpha-1} \le \frac{(\alpha-1)c^{1/(\alpha-1)}\sigma^{\alpha/(\alpha-1)}\hat{\eta}}{s^{1/(\alpha-1)}} + (2-\alpha) s^{1/(2-\alpha)}. 
\end{align*}
Further, let $s>0$ be such that $\varepsilon/8=(2-\alpha) s^{1/(2-\alpha)}$. Then, one has $s^{1/(\alpha-1)}=(\varepsilon/(8(2-\alpha)))^{(2-\alpha)/(\alpha-1)}$. Combining these and the above inequality, and using $(\alpha-2)^{\alpha-2}\le 1$, we conclude that \eqref{ineq:tech-alp} holds.
\end{proof}

The next lemma will be used to derive the complexity bounds.

\begin{lemma}\label{lem:tech-qm}
Let $a,b,c>0$ be given, $t^*=\min\{c,(a/b)^{1/2}\}$, and $\varphi(t)=a/t + bt$ for $t\in(0,\infty)$. Then, it holds that 
\begin{equation} \label{ineq:tech-qm}
\min_{t\in(0,c]} \varphi(t)=\varphi(t^*)\le a/c + 2(ab)^{1/2}.
\end{equation}
\end{lemma}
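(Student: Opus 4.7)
The plan is to analyze $\varphi$ directly using convex calculus and split into two cases depending on which argument of the $\min$ defining $t^*$ is active. First I would observe that $\varphi$ is strictly convex on $(0,\infty)$ with derivative $\varphi'(t)=-a/t^2+b$, so the unique unconstrained minimizer of $\varphi$ on $(0,\infty)$ is $\bar t=(a/b)^{1/2}$, with $\varphi(\bar t)=2(ab)^{1/2}$. Moreover $\varphi$ is strictly decreasing on $(0,\bar t]$ and strictly increasing on $[\bar t,\infty)$.

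Next I would use this monotonicity to identify the constrained minimizer on $(0,c]$. If $\bar t\le c$, then $\bar t\in(0,c]$ and so $\mathrm{argmin}_{t\in(0,c]}\varphi(t)=\bar t=t^*$; if instead $\bar t>c$, then $\varphi$ is decreasing throughout $(0,c]$, so the minimum is attained at the right endpoint $c=t^*$. In either case $\min_{t\in(0,c]}\varphi(t)=\varphi(t^*)$, which establishes the equality in \eqref{ineq:tech-qm}.

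For the inequality it suffices to bound $\varphi(t^*)$ in each case. In the case $t^*=\bar t$, one has $\varphi(t^*)=2(ab)^{1/2}\le a/c+2(ab)^{1/2}$ trivially, since $a/c>0$. In the case $t^*=c$ (so $c<\bar t=(a/b)^{1/2}$), I would compute $\varphi(t^*)=a/c+bc$ and observe that $bc<b\cdot (a/b)^{1/2}=(ab)^{1/2}\le 2(ab)^{1/2}$, which yields $\varphi(t^*)\le a/c+2(ab)^{1/2}$.

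There is no real obstacle here; the argument is a routine one-dimensional convex optimization. The only mild care point is tracking which branch of the $\min$ in the definition of $t^*$ is active and checking both bounds use the same right-hand side $a/c+2(ab)^{1/2}$, which, as shown above, holds because in the interior case the extra $a/c$ term absorbs any slack, and in the boundary case the constraint $c\le(a/b)^{1/2}$ keeps the linear term $bc$ below $(ab)^{1/2}$.
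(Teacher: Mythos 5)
Your proof is correct and follows essentially the same route as the paper's: identify the constrained minimizer via the monotonicity of $\varphi$ on either side of $(a/b)^{1/2}$, then verify the bound in the two cases $c\le(a/b)^{1/2}$ (where $bc\le(ab)^{1/2}$) and $c>(a/b)^{1/2}$ (where the $a/c$ term absorbs the slack). The only difference is that you spell out the justification of the equality $\min_{t\in(0,c]}\varphi(t)=\varphi(t^*)$, which the paper dismisses as easy.
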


\begin{proof}
It is easy to see that the  first relation in \eqref{ineq:tech-qm} holds. We now prove the second relation in \eqref{ineq:tech-qm} by considering two separate cases. If $c\le (a/b)^{1/2}$, one has $\varphi(t^*) = a/c + bc \le a/c + (ab)^{1/2}$. On the other hand,  if $c> (a/b)^{1/2}$, one has $\varphi(t^*) = 2(ab)^{1/2} < a/c + 2(ab)^{1/2}$. Combining these cases, we conclude that the second relation in \eqref{ineq:tech-qm} holds as desired.
\end{proof}

The lemma below provides an inequality that will used to establish a concentration inequality subsequently.

\begin{lemma}\label{lem:tech-ci}
Let $\alpha\in(1,2]$ be given. Then, $e^t \le t + e^{|t|^\alpha}$ holds for all $t\in\R$.    
\end{lemma}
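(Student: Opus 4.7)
The plan is to prove the inequality by case analysis on the magnitude of $t$, exploiting two elementary comparisons valid for $\alpha \in (1,2]$: namely, $|t|^\alpha \ge |t|$ when $|t| \ge 1$, and $|t|^\alpha \ge t^2$ when $|t| \le 1$ (the latter because $|t|^\alpha = t^2 \cdot |t|^{\alpha-2}$ and $|t|^{\alpha-2} \ge 1$ for $|t| \le 1$ and $\alpha \le 2$).

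For $|t| \ge 1$, monotonicity of $s \mapsto e^s$ gives $e^{|t|^\alpha} \ge e^{|t|}$, and I would split into two subcases. If $t \ge 1$, then $e^t - t \le e^t \le e^{|t|^\alpha}$. If $t \le -1$, I combine $e^t \le 1$ (since $t \le 0$) with $e^{|t|} \ge 1 + |t|$ (the standard tangent bound) to get $e^t - t \le 1 + |t| \le e^{|t|} \le e^{|t|^\alpha}$.

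For $|t| \le 1$, the bound $|t|^\alpha \ge t^2$ together with $e^s \ge 1 + s$ reduces the claim to showing $e^t - t \le 1 + t^2$, or equivalently $p(t) := 1 + t + t^2 - e^t \ge 0$ on $[-1,1]$. I would verify $p(0) = p'(0) = 0$ and $p''(t) = 2 - e^t$. On $(-\infty,0]$ one has $p'' \ge 1$, so $p$ is convex and nonnegative. On $[0,1]$ the sign of $p''$ changes at $\ln 2$, but a short check shows that $p'$ increases from $p'(0)=0$ on $[0,\ln 2]$ and then decreases on $[\ln 2,1]$ to $p'(1) = 3-e > 0$, hence $p' \ge 0$ on $[0,1]$ and $p$ is nondecreasing, so $p \ge p(0) = 0$.

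The main (mild) obstacle is the small-$|t|$ regime, since a naive $e^s \ge 1+s$ is too weak there; the quadratic refinement $e^t \le 1 + t + t^2$ on $[-1,1]$ is what makes the reduction $|t|^\alpha \ge t^2$ useful, and verifying that refinement cleanly on $[0,1]$ (where $p''$ is not of constant sign) is the only place requiring a bit of care beyond routine estimates.
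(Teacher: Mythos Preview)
Your proof is correct and follows essentially the same approach as the paper: the same case split on $|t|\ge 1$ versus $|t|\le 1$, and in the latter case the same reduction to the intermediate inequality $e^t \le 1 + t + t^2$ followed by $t^2 \le |t|^\alpha$ and $1+|t|^\alpha \le e^{|t|^\alpha}$. The only minor difference is in how that intermediate inequality is verified: the paper bounds the Taylor remainder directly via $\sum_{s\ge 2} t^s/s! \le t^2 \sum_{s\ge 2} 1/s! = (e-2)t^2 < t^2$ for $|t|\le 1$, whereas you argue via the sign of $p'$ and $p''$; both are valid and amount to the same thing.
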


\begin{proof}
We prove this inequality by considering three separate cases.

Case 1) $t\in(1,\infty)$. This along with $\alpha>1$ implies that $e^t<e^{t^\alpha}=e^{|t|^\alpha}$, and hence   $e^t \le t + e^{|t|^\alpha}$ holds.

Case 2) $t\in[-1,1]$. By this and $\alpha\in(1,2]$, one has
\begin{align*}
e^t & = 1 + t + \sum_{s=2}^\infty \frac{t^s}{s!} \le 1 + t + t^2\sum_{s=2}^\infty \frac{1}{s!} = 1 + t + t^2\Big(\sum_{s=0}^\infty \frac{1}{s!}-2\Big) = 1 + t + (e-2)t^2 \\ 
& <  1 + t + t^2 \leq 1 + t + |t|^\alpha \le t + e^{|t|^\alpha},   
\end{align*}
where the first and third inequalities follow from $t\in[-1,1]$ and $\alpha \leq 2$, and the last inequality is due to the convexity of the exponential function.

Case 3) $t\in(-\infty,-1)$. Using this and $\alpha\in(1,2]$, we have
\[
e^{|t|^\alpha}\ge e^{-t} \ge 1 - t \ge e^t - t, 
\]
where the first and last inequalities are due to $t<-1$ and $\alpha>1$, and the second inequality follows from the convexity of the exponential function.

Combining the above three cases, we conclude that $e^t \le t + e^{|t|^\alpha}$ holds for all $t\in\R$.
\end{proof}

The next lemma provides a concentration inequality for a martingale difference sequence of sub-Weibull random variables, which generalizes the result established in \cite[Lemma 2]{lan2012validation} for sub-Gaussian random variables. It will be used to establish high-probability complexity bounds.

\begin{lemma}\label{lem:mds}
Let $\{\xi_k\}$ be a sequence of i.i.d. random variables, $\varsigma>0$ and $\alpha\in(1,2]$ be given, and $\{\phi_k(\cdot)\}$ be a sequence of deterministic functions. Define $\phi^k = \phi_k(\xi_{[k]})$ with $\xi_{[k]}=\{\xi_t\}_{t=0}^k$ for all $k=0,1,\ldots$, and let $\E_{\xi_0}[\cdot\,| \xi_{[-1]}]:=\E_{\xi_0}[\cdot]$. Suppose that $\E_{\xi_k}[\phi^k\,|\xi_{[k-1]}]=0$ and $\E_{\xi_k}[\exp\{|\phi^k/\varsigma|^\alpha\}\,|\xi_{[k-1]}] \le \exp\{1\}$ hold for all $k=1,2,\ldots$. Then, we have
\begin{align}\label{ineq:mds}
\mathbb{P}\bigg(\sum_{k=0}^{K-1}\phi^k>\Omega\varsigma K^{\frac{1}{\alpha}}\bigg) \le \exp\Big\{(1-\alpha)\Big(\frac{\Omega}{\alpha}\Big)^{\frac{\alpha}{\alpha-1}}\Big\}\qquad\forall \Omega\ge0,K\ge \max\Big\{1,\mathbbm{1}_{(1,2)}(\alpha)\Big(\frac{\Omega}{\alpha}\Big)^{\frac{\alpha}{\alpha-1}}\Big\}.
\end{align}
\end{lemma}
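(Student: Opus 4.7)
The plan is to combine a Chernoff-type bound with a sub-Weibull MGF estimate that follows from Lemma~\ref{lem:tech-ci} and Lyapunov's inequality. By Markov's inequality, for any $\lambda>0$,
\[
\mathbb{P}\bigg(\sum_{k=0}^{K-1}\phi^k > \Omega\varsigma K^{1/\alpha}\bigg) \le \exp\bigl(-\lambda\Omega\varsigma K^{1/\alpha}\bigr)\cdot\mathbb{E}\bigg[\exp\bigg(\lambda\sum_{k=0}^{K-1}\phi^k\bigg)\bigg],
\]
so the entire problem reduces to controlling the joint MGF on the right. Using the tower property and the martingale-difference structure, it suffices to establish a uniform conditional bound $\mathbb{E}_{\xi_k}[\exp(\lambda\phi^k)\mid \xi_{[k-1]}] \le \exp((\lambda\varsigma)^\alpha)$ for $\lambda\in(0,1/\varsigma]$, which iterated gives $\mathbb{E}[\exp(\lambda\sum_{k=0}^{K-1}\phi^k)]\le \exp(K(\lambda\varsigma)^\alpha)$.

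The conditional MGF bound is the technical core and I would derive it in two steps. First, Lemma~\ref{lem:tech-ci} applied pointwise to $t=\lambda\phi^k$, together with $\mathbb{E}_{\xi_k}[\phi^k\mid \xi_{[k-1]}]=0$ to kill the linear term, immediately gives
\[
\mathbb{E}_{\xi_k}\bigl[\exp(\lambda\phi^k)\mid \xi_{[k-1]}\bigr] \le \mathbb{E}_{\xi_k}\bigl[\exp(\lambda^\alpha|\phi^k|^\alpha)\mid \xi_{[k-1]}\bigr].
\]
Second, rewriting $\lambda^\alpha|\phi^k|^\alpha = (\lambda\varsigma)^\alpha\cdot|\phi^k/\varsigma|^\alpha$ and invoking the conditional Lyapunov inequality $\mathbb{E}[Z^t\mid\mathcal{F}]\le \mathbb{E}[Z\mid\mathcal{F}]^t$ for $Z\ge 0$ and $t\in(0,1]$ -- applied with $Z=\exp(|\phi^k/\varsigma|^\alpha)$ and $t=(\lambda\varsigma)^\alpha$, feasible precisely when $\lambda\varsigma\le 1$ -- together with the sub-Weibull hypothesis $\mathbb{E}_{\xi_k}[Z\mid \xi_{[k-1]}]\le e$, produces exactly $\exp((\lambda\varsigma)^\alpha)$.

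Substituting back into Markov, the last step is to minimize $g(\lambda):=-\lambda\Omega\varsigma K^{1/\alpha}+K(\lambda\varsigma)^\alpha$ over $\lambda\in(0,1/\varsigma]$. The first-order condition $g'(\lambda)=0$ yields an explicit unconstrained minimizer $\lambda^*$, and a direct plug-in gives the target exponent $g(\lambda^*)=(1-\alpha)(\Omega/\alpha)^{\alpha/(\alpha-1)}$. Crucially, the feasibility condition $\lambda^*\le 1/\varsigma$ is equivalent to $K\ge(\Omega/\alpha)^{\alpha/(\alpha-1)}$, which is exactly the hypothesis placed on $K$ in the heavy-tailed regime $\alpha\in(1,2)$.

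The main obstacle I anticipate is the sub-Gaussian endpoint $\alpha=2$, where the hypothesis only asks $K\ge 1$. When $K<(\Omega/2)^2$ the minimizer $\lambda^*$ exits the feasible interval $(0,1/\varsigma]$ and the main argument does not directly apply. Here I would supplement with a Young-type inequality $\lambda\phi^k\le (\lambda\varsigma)^2/4+(\phi^k/\varsigma)^2$, which together with the sub-Weibull hypothesis gives $\mathbb{E}_{\xi_k}[\exp(\lambda\phi^k)\mid \xi_{[k-1]}]\le \exp(1+(\lambda\varsigma)^2/4)$ valid for all $\lambda>0$; Chernoff optimization then produces the alternative bound $\exp(K-\Omega^2)$. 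One then verifies that in the complementary regime $K<(\Omega/2)^2$, i.e., $\Omega>2K^{1/2}$, one has $K-\Omega^2\le -\Omega^2/4$, so the target $\exp(-\Omega^2/4)$ is recovered, and the two arguments together cover all admissible $(\Omega,K)$ pairs.
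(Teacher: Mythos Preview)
Your proposal is correct and follows essentially the same route as the paper: Chernoff bound plus a conditional MGF estimate obtained from Lemma~\ref{lem:tech-ci} together with the Lyapunov/Jensen inequality, then optimization over $\lambda\in(0,1/\varsigma]$ with the feasibility of the optimizer matching the hypothesis on $K$. The only noteworthy difference is that the paper handles the endpoint $\alpha=2$ by citing an external sub-Gaussian concentration result, whereas you supply a short self-contained argument (Young's inequality $\lambda\phi^k\le(\lambda\varsigma)^2/4+(\phi^k/\varsigma)^2$ to cover the regime $K<(\Omega/2)^2$); this makes your version self-contained at essentially no extra cost.
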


\begin{proof}
When $\alpha=2$, \eqref{ineq:mds} holds due to \cite[Lemma 2]{lan2012validation}. It remains to show that \eqref{ineq:mds} holds for any $\alpha\in(1,2)$. To this end, we first show that 
\begin{equation} \label{exp-ineq}
\E_{\xi_k}\big[\exp\{\tau \phi^k\}\, \big|\xi_{[k-1]}\big]  \le \exp\{\max\{(\tau \varsigma)^\alpha,(\tau \varsigma)^{\alpha^\prime}\}\} \qquad \forall \tau\ge0,k\ge0,
\end{equation}
where $\alpha^\prime=\alpha/(\alpha-1)$. Indeed, for notational convenience, we denote $\Bar{\phi}^k = \phi^k/\varsigma$ for all $k\ge0$. Then, one has $\E_{\xi_k}[\Bar{\phi}^k\,| \xi_{[k-1]}]=0$ and $\E_{\xi_k}[\exp\{|\Bar{\phi}^k|^\alpha\}\,| \xi_{[k-1]}] \le \exp\{1\}$ for all $k\ge0$. By Jensen's inequality and the concavity of $\psi(s)=s^\beta$ for any $\beta\in[0,1]$, one has that 
\begin{align}
\E_{\xi_k}\big[\exp\{\beta |\Bar{\phi}^k|^\alpha\}\, \big|\xi_{[k-1]}\big] & = \E_{\xi_k}\big[(\exp\{|\Bar{\phi}^k|^\alpha\})^{\beta}\, \big| \xi_{[k-1]}\big]\nonumber\\
&\le \big(\E_{\xi_k}\big[\exp\{|\Bar{\phi}^k|^\alpha\}\, \big|\xi_{[k-1]}\big]\big)^{\beta} \le \exp\{\beta\}\qquad \forall \beta\in[0,1], k\ge0, \label{ineq:tech-ci-1}
\end{align}
where the last inequality follows from $\E_{\xi_k}[\exp\{|\Bar{\phi}^k|^\alpha\}\,| \xi_{[k-1]}] \le \exp\{1\}$. Using this and Lemma \ref{lem:tech-ci} with $t=\lambda \Bar{\phi}^k$, we obtain that 
\begin{align}\label{ineq:tech-ci-2}
\E_{\xi_k}\big[\exp\{\lambda \Bar{\phi}^k\}\, \big|\xi_{[k-1]}\big] \le \E_{\xi_k}\big[\lambda \Bar{\phi}^k\, \big|\xi_{[k-1]}\big] + \E_{\xi_k}\big[\exp\{|\lambda \Bar{\phi}^k|^{\alpha}\}\, \big|\xi_{[k-1]}\big] \le \exp\{\lambda^\alpha\} \qquad \forall \lambda\in[0,1], k\ge0,
\end{align}
where the last inequality is due to $\E_{\xi_k}[\Bar{\phi}^k\,| \xi_{[k-1]}]=0$ and \eqref{ineq:tech-ci-1} with 
$\beta=\lambda^\alpha$. In addition, by $\alpha^\prime=\alpha/(\alpha-1)$ and Young's inequality, one has $t s \le |t|^\alpha/\alpha + |s|^{\alpha^\prime}/\alpha^\prime$ for all $s,t\in\R$. It follows from this and \eqref{ineq:tech-ci-1} with $\beta=1/\alpha$ that 
\begin{align}
\E_{\xi_k}\big[\exp\{\lambda \Bar{\phi}^k\}\, \big|\xi_{[k-1]}\big] &\le \E_{\xi_k}\big[\exp\{|\Bar{\phi}^k|^{\alpha}/\alpha\}\, \big|\xi_{[k-1]}\big]\cdot \exp\{\lambda^{\alpha^\prime}/\alpha^\prime\} \overset{\eqref{ineq:tech-ci-1}}{\le} \exp\{1/\alpha + \lambda^{\alpha^\prime}/\alpha^\prime\} \nonumber \\
&\le \exp\{(1/\alpha + 1/\alpha^\prime) \lambda^{\alpha^\prime}\} = \exp\{\lambda^{\alpha^\prime}\}   \qquad \forall \lambda\ge1,  k\ge0,  \label{ineq:tech-ci-2b}
\end{align}
where the third inequality is due to $\alpha^\prime>0$ and $\lambda\ge1$, and the last relation follows from $1/\alpha + 1/\alpha^\prime=1$. Using \eqref{ineq:tech-ci-2} and \eqref{ineq:tech-ci-2b}, we obtain that $\E_{\xi_k}\big[\exp\{\lambda \Bar{\phi}^k\}\, \big|\xi_{[k-1]}\big]\le\exp\{\max\{\lambda^\alpha,\lambda^{\alpha^\prime}\}\}$ for all $\lambda\ge0$ and $k \geq 0$. Substituting $\Bar{\phi}^k = \phi^k/\varsigma$ into this inequality, and rearranging the terms, we obtain that 
\begin{align*}
\E_{\xi_k}\big[\exp\{\tau \phi^k\}\, \big|\xi_{[k-1]}\big] = \E_{\xi_k}\big[\exp\{\tau\varsigma \Bar{\phi}^k\}\, \big|\xi_{[k-1]}\big] \le \exp\{\max\{(\tau \varsigma)^\alpha,(\tau \varsigma)^{\alpha^\prime}\}\} \qquad \forall \tau\ge0,k\ge0,
\end{align*}
and hence \eqref{exp-ineq} holds as desired. 

Further, using \eqref{exp-ineq} and the definition of $\phi^k$ for all $k\geq 0$, we obtain that for all $\tau\ge0$ and $K\ge1$, 
\begin{align*}
\E\bigg[\exp\bigg\{\tau \sum_{k=0}^{K-1}\phi^k\bigg\}\bigg] & =\E_{\xi_{[K-2]}}\bigg[\E_{\xi_{K-1}}\bigg[\exp\bigg\{\tau \sum_{k=0}^{K-1}\phi^k\bigg\}\, \bigg| \xi_{[K-2]}\bigg]\bigg] \\
&=\E_{\xi_{[K-2]}}\bigg[\exp\bigg\{\tau \sum_{k=0}^{K-2}\phi^k\bigg\}\cdot\E_{\xi_{K-1}}\big[\exp\big\{\tau \phi^{K-1}\big\}\, \big|\xi_{[K-2]}\big]\bigg] \\
&=\E\bigg[\exp\bigg\{\tau \sum_{k=0}^{K-2}\phi^k\bigg\}\cdot\E_{\xi_{K-1}}\big[\exp\big\{\tau \phi^{K-1}\big\}\, \big|\xi_{[K-2]}\big]\bigg] \\
& \overset{\eqref{exp-ineq}}{\leq} \E\bigg[\exp\bigg\{\tau \sum_{k=0}^{K-2}\phi^k\bigg\}\bigg]\cdot\exp\{\max\{(\tau \varsigma)^\alpha,(\tau \varsigma)^{\alpha^\prime}\}\}.
\end{align*}
This recursion implies that
\[
\E\bigg[\exp\bigg\{\tau \sum_{k=0}^{K-1}\phi^k\bigg\}\bigg] \le \exp\big\{\max\{(\tau \varsigma)^\alpha,(\tau \varsigma)^{\alpha^\prime}\}K\big\} \qquad \forall \tau\ge0,K\ge1.
\]
Using this and Markov's inequality, we have that for all $\tau>0$, $\Omega\ge0$, and $K\ge1$, 
\begin{align}
&\mathbb{P}\bigg(\sum_{k=0}^{K-1}\phi^k>\Omega\varsigma  K^{\frac{1}{\alpha}}\bigg) = \mathbb{P}\bigg(\exp\bigg\{\tau\sum_{k=0}^{K-1}\phi^k\bigg\}>\exp\big\{\tau\Omega\varsigma  K^{\frac{1}{\alpha}}\big\}\bigg) \nonumber\\
& \le \exp\big\{-\tau\Omega\varsigma  K^{\frac{1}{\alpha}}\big\}\E\bigg[\exp\bigg\{\tau\sum_{k=0}^{K-1}\phi^k\bigg\}\bigg] \leq \exp\big\{\max\{(\tau \varsigma)^\alpha,(\tau \varsigma)^{\alpha^\prime}\}K-\tau\Omega\varsigma  K^{\frac{1}{\alpha}}\big\}. \label{ineq:tech-ci-3}
\end{align}
Let $\tau=(\Omega/\alpha)^{1/(\alpha-1)}/(\varsigma K^{1/\alpha})$. It follows that $\tau\varsigma\le1$ for all $K\ge (\Omega/\alpha)^{\alpha/(\alpha-1)}$, which together with $\alpha^\prime>\alpha$ implies that $\max\{(\tau \varsigma)^\alpha,(\tau \varsigma)^{\alpha^\prime}\}=(\tau \varsigma)^\alpha$. By this, \eqref{ineq:tech-ci-3}, and the expression of $\tau$, one has that for all $\Omega\ge0$ and $K\ge\max\{1,(\Omega/\alpha)^{\alpha/(\alpha-1)}\}$, 
\begin{align*}
\mathbb{P}\bigg(\sum_{k=0}^{K-1}\phi^k>\Omega\varsigma  K^{\frac{1}{\alpha}}\bigg) \le \exp\big\{(\tau \varsigma)^\alpha K-\tau\Omega\varsigma  K^{\frac{1}{\alpha}}\big\} = \exp\Big\{(1-\alpha)\Big(\frac{\Omega}{\alpha}\Big)^{\frac{\alpha}{\alpha-1}}\Big\}.
\end{align*}
Hence, \eqref{ineq:mds} also holds for any $\alpha\in(1,2)$.
\end{proof}

\subsection{Proof of the main results in Section~\ref{sec:psg}}\label{subsec:pf-psg}

In this subsection, we first establish a lemma and then use it to prove Theorem~\ref{thm:cv-psg}.

\begin{lemma}\label{lem:desc-psg}
Suppose that Assumption~\ref{asp:basic} holds. Let $\epsilon\in(0,1)$ be arbitrarily chosen, $L_f$, $M_f$, $\alpha$, and $\sigma$ be given in Assumption~\ref{asp:basic}, and $L(\cdot)$ be defined in \eqref{def:inexact-Lip}. Let $\{x^k\}$ be the sequence generated by Algorithm~\ref{alg:dq-f} with step sizes $\{\eta_k\}$ satisfying $\eta_k\in\big(0, \frac{1}{4(L_f+L(\epsilon))}\big]$ for all $k\ge0$. Then, it holds that for all $k\ge0$, 
\begin{align}
\eta_k(F(x^{k+1}) - F^*)\le &\ (\|x^k-x^*\|^2 - \|x^{k+1}-x^*\|^2)/2 + M_f^2\eta_k^2 + \eta_k\Delta_k \nonumber\\
& + \frac{(8(\alpha-1))^{\alpha-1}D_h^{2-\alpha}\eta_k^\alpha\delta_k^\alpha}{\alpha^\alpha} + \frac{\epsilon\eta_k}{8},\label{ineq:desc-rec} 
\end{align}
where
\begin{align}\label{def:zetak-deltak-1}
\Delta_k=(\E[G(x^k;\xi_k)] - G(x^k;\xi_k))^T(x^k - x^*),\quad \delta_k=\|\E[G(x^k;\xi_k)] - G(x^k;\xi_k)\|\qquad \forall k\ge0.    
\end{align}
\end{lemma}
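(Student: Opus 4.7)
The plan is to combine the proximal first-order optimality condition with the hybrid descent inequality \eqref{ineq:desc_new} and then tame the cross terms via Young's inequality, exploiting the diameter bound $D_h$ to accommodate the heavy-tailed exponent $\alpha$. I would first read off from the prox step \eqref{prox-step} the subgradient identity $h'(x^{k+1}) := (x^k - x^{k+1})/\eta_k - G(x^k;\xi_k) \in \partial h(x^{k+1})$ and apply convexity of $h$ at $x^*$ to get $h(x^{k+1}) - h(x^*) \le h'(x^{k+1})^T(x^{k+1} - x^*)$. Writing $f(x^k) - f(x^*) \le \E[G(x^k;\xi_k)]^T(x^k - x^*)$ by convexity of $f$ together with Assumption~\ref{asp:basic}(c), and bounding $f(x^{k+1}) - f(x^k)$ by \eqref{ineq:desc_new} with $\varepsilon = \epsilon$, then summing the three estimates and substituting the formula for $h'(x^{k+1})$ cancels the $G(x^k;\xi_k)^T(x^{k+1}-x^*)$ contributions. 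What remains on the right is the noise inner product $(\E[G(x^k;\xi_k)] - G(x^k;\xi_k))^T(x^{k+1}-x^*)$, a prox telescoping term $\eta_k^{-1}(x^k - x^{k+1})^T(x^{k+1}-x^*)$, the descent quadratic $\tfrac12(L_f + L(\epsilon))\|x^{k+1}-x^k\|^2$, a cross term $M_f\|x^{k+1}-x^k\|$, and the bias $\epsilon/8$.

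Multiplying through by $\eta_k$ and applying the three-point identity $2(x^k-x^{k+1})^T(x^{k+1}-x^*) = \|x^k-x^*\|^2 - \|x^{k+1}-x^*\|^2 - \|x^k-x^{k+1}\|^2$ produces the telescoping pair in \eqref{ineq:desc-rec} along with a $-\tfrac12\|x^k-x^{k+1}\|^2$ residual. The step-size constraint $\eta_k \le 1/(4(L_f + L(\epsilon)))$ ensures the descent quadratic contributes at most $\tfrac18\|x^{k+1}-x^k\|^2$, so the net coefficient of $\|x^{k+1}-x^k\|^2$ is at most $-\tfrac38$ and provides the budget to absorb the two remaining cross terms. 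I would then split the noise inner product as $\eta_k\Delta_k + \eta_k(\E[G(x^k;\xi_k)] - G(x^k;\xi_k))^T(x^{k+1}-x^k)$, bound the second piece by Cauchy--Schwarz as $\eta_k\delta_k\|x^{k+1}-x^k\|$, and use the symmetric Young step $M_f\eta_k\|x^{k+1}-x^k\| \le M_f^2\eta_k^2 + \tfrac14\|x^{k+1}-x^k\|^2$ to peel off the target $M_f^2\eta_k^2$ summand, leaving $-\tfrac18\|x^{k+1}-x^k\|^2$ of absorption slack.

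The decisive calibration is on $\eta_k\delta_k\|x^{k+1}-x^k\|$. Applying Young with conjugate exponents $\alpha$ and $\alpha/(\alpha-1)$ and a free scale $\lambda > 0$ yields $\eta_k\delta_k\|x^{k+1}-x^k\| \le (\lambda^\alpha/\alpha)(\eta_k\delta_k)^\alpha + ((\alpha-1)/(\alpha\lambda^{\alpha/(\alpha-1)}))\|x^{k+1}-x^k\|^{\alpha/(\alpha-1)}$. Since $\alpha\in(1,2]$ forces $\alpha/(\alpha-1)\ge 2$, the diameter bound supplied by Assumption~\ref{asp:basic}(b) converts the super-quadratic remainder via $\|x^{k+1}-x^k\|^{\alpha/(\alpha-1)} \le D_h^{(2-\alpha)/(\alpha-1)}\|x^{k+1}-x^k\|^2$ into a genuine quadratic that the remaining $-\tfrac18\|x^{k+1}-x^k\|^2$ slack can swallow. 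Equating the quadratic coefficient to $1/8$ forces $\lambda^{\alpha/(\alpha-1)} = 8(\alpha-1)D_h^{(2-\alpha)/(\alpha-1)}/\alpha$, and hence $\lambda^\alpha/\alpha = (8(\alpha-1))^{\alpha-1}D_h^{2-\alpha}/\alpha^\alpha$, which is precisely the constant in \eqref{ineq:desc-rec}. The main obstacle is this exponent mismatch: when $\alpha<2$, a plain Young step produces a power of $\|x^{k+1}-x^k\|$ strictly exceeding $2$ that the descent lemma cannot absorb on its own, so the boundedness of $\mathrm{dom}\,h$ is structurally essential rather than merely convenient.
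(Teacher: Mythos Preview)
Your proposal is correct and follows essentially the same route as the paper's proof: both combine the prox optimality condition with convexity of $h$, invoke \eqref{ineq:desc_new} and convexity of $f$, use the three-point identity and the AM--GM split $M_f\|x^{k+1}-x^k\|\le \|x^{k+1}-x^k\|^2/(4\eta_k)+M_f^2\eta_k$, and then handle the noise cross term via Young's inequality with conjugate exponents $\alpha,\alpha/(\alpha-1)$ together with the diameter estimate $\|x^{k+1}-x^k\|^{\alpha/(\alpha-1)}\le D_h^{(2-\alpha)/(\alpha-1)}\|x^{k+1}-x^k\|^2$. The only cosmetic difference is that the paper applies Young before multiplying through by $\eta_k$ and does not separate out the Cauchy--Schwarz step explicitly, but your calibration of $\lambda$ yields exactly the constant $(8(\alpha-1))^{\alpha-1}D_h^{2-\alpha}/\alpha^\alpha$ appearing in \eqref{ineq:desc-rec}.
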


\begin{proof}
Fix any $k\ge0$. By the optimality condition of \eqref{prox-step}, there exists $h^\prime(x^{k+1})\in\partial h(x^{k+1})$ such that 
\begin{align*}
G(x^k;\xi_k) +  \eta_k^{-1}(x^{k+1} - x^k) + h^\prime(x^{k+1}) = 0,     
\end{align*}
which along with the convexity of $h$ implies that
\begin{align}
h(x^{k+1}) & \le h(x^*) + h^\prime(x^{k+1})^T(x^{k+1} - x^*) = h(x^*) + G(x^k;\xi_k)^T(x^* - x^{k+1}) + \eta_k^{-1}(x^{k+1} - x^k)^T(x^* - x^{k+1})\nonumber\\
&= h(x^*) + G(x^k;\xi_k)^T(x^* - x^{k+1}) + (2\eta_k)^{-1}(\|x^k-x^*\|^2 - \|x^{k+1}-x^*\|^2 -\|x^{k+1} - x^k\|^2).\label{inter1}
\end{align}
Denote $f^\prime(x^k) =\E_{\xi_k}[G(x^k;\xi_k)]$. It follows from \eqref{cond:ac} that  $f^\prime(x^k)\in\partial f(x^k)$. By this, \eqref{ineq:desc_new}, and the convexity of $f$, one has
\begin{align*}
f(x^{k+1}) & \overset{\eqref{ineq:desc_new}}{\le} f(x^k) + f^\prime(x^k)^T(x^{k+1} - x^k) + \frac{L_f+L(\epsilon)}{2}\|x^{k+1} - x^k\|^2 + \frac{\epsilon}{8} + M_f\|x^{k+1} - x^k\| \\
&\le f(x^*) + f^\prime(x^k)^T(x^{k+1} - x^*) + \frac{L_f+L(\epsilon)}{2}\|x^{k+1} - x^k\|^2 + \frac{\epsilon}{8} + M_f\|x^{k+1} - x^k\|.
\end{align*}
Using this, \eqref{def:zetak-deltak-1}, and \eqref{inter1}, we obtain that 
\begin{align}
F(x^{k+1})&\overset{\eqref{def:zetak-deltak-1}\eqref{inter1}}{\le} F(x^*) + \Delta_k + (f^\prime(x^k) - G(x^k;\xi_k))^T(x^{k+1} - x^k) + (2\eta_k)^{-1}(\|x^k-x^*\|^2 - \|x^{k+1}-x^*\|^2)\nonumber\\
&\quad + \Big(\frac{L_f + L(\epsilon)}{2} - \frac{1}{2\eta_k}\Big) \|x^{k+1} - x^k\|^2 + \frac{\epsilon}{8} + M_f\|x^{k+1} - x^k\|\nonumber\\
&\le F(x^*) + \Delta_k + (f^\prime(x^k) - G(x^k;\xi_k))^T(x^{k+1} - x^k) + (2\eta_k)^{-1}(\|x^k-x^*\|^2 - \|x^{k+1}-x^*\|^2)\nonumber\\
&\quad + \Big(\frac{L_f + L(\epsilon)}{2} - \frac{1}{4\eta_k}\Big)\|x^{k+1} - x^k\|^2 + \frac{\epsilon}{8} + M_f^2\eta_k,\label{inter2}
\end{align}
where the last inequality follows from $M_f\|x^{k+1} - x^k\|\le \|x^{k+1} - x^k\|^2/(4\eta_k) + M_f^2\eta_k$. In addition, let $\alpha^\prime=\alpha/(\alpha-1)$. Observe that  $\alpha^\prime \geq 2$ due to $\alpha \in (1,2]$. This together with \eqref{def:diam-h} and $x^{k+1},x^k\in\mathrm{dom}\,h$ implies that $\|x^{k+1} - x^k\|^{\alpha^\prime}\le D_h^{\alpha^\prime-2}\|x^{k+1} - x^k\|^2$.  Using this, \eqref{def:zetak-deltak-1},  and Young's inequality, we have
\begin{align*}
&(f^\prime(x^k) - G(x^k;\xi_k))^T(x^{k+1} - x^k)\le \frac{\Big(\Big(\frac{\alpha^\prime}{8D_h^{\alpha^\prime-2}\eta_k}\Big)^{1/\alpha^\prime}\|x^{k+1} - x^k\|\Big)^{\alpha^\prime}}{\alpha^\prime} + \frac{\Big(\Big(\frac{8D_h^{\alpha^\prime-2}\eta_k}{\alpha^\prime}\Big)^{1/\alpha^\prime}\delta_k\Big)^\alpha}{\alpha}\\[7pt]
&=\frac{\|x^{k+1} - x^k\|^{\alpha^\prime}}{8D_h^{\alpha^\prime-2}\eta_k} + \frac{(8(\alpha-1))^{\alpha-1}D_h^{2-\alpha}\eta_k^{\alpha-1}\delta_k^\alpha}{\alpha^\alpha}\le\frac{\|x^{k+1} - x^k\|^2}{8\eta_k} + \frac{(8(\alpha-1))^{\alpha-1}D_h^{2-\alpha}\eta_k^{\alpha-1}\delta_k^\alpha}{\alpha^\alpha}.
\end{align*}
By this inequality, \eqref{inter2}, Lemma \ref{lem:tech-alp}, and $\eta_k\in\big(0, \frac{1}{4(L_f + L(\epsilon))}\big]$, we obtain that
\begin{align*}
F(x^{k+1}) - F(x^*) & \le (2\eta_k)^{-1}(\|x^k-x^*\|^2 - \|x^{k+1}-x^*\|^2) + M_f^2\eta_k + \Delta_k\\[4pt]
&\quad + \Big(\frac{L_f + L(\epsilon)}{2} - \frac{1}{8\eta_k}\Big)\|x^{k+1} - x^k\|^2 + \frac{(8(\alpha-1))^{\alpha-1}D_h^{2-\alpha}\eta_k^{\alpha-1}\delta_k^\alpha}{\alpha^\alpha} + \frac{\epsilon}{8}\\
& \le (2\eta_k)^{-1}(\|x^k-x^*\|^2 - \|x^{k+1}-x^*\|^2) + M_f^2\eta_k + \Delta_k  + \frac{(8(\alpha-1))^{\alpha-1}D_h^{2-\alpha}\eta_k^{\alpha-1}\delta_k^\alpha}{\alpha^\alpha} + \frac{\epsilon}{8}. 
\end{align*}
Hence, the conclusion \eqref{ineq:desc-rec} holds. 
\end{proof}

We are now ready to provide a proof of Theorem~\ref{thm:cv-psg}.

\begin{proof}[\textbf{Proof of Theorem \ref{thm:cv-psg}}]
Using \eqref{weight-average},  \eqref{ineq:desc-rec}, and the convexity of $f$, we obtain that
\begin{align}
& F(z^K) - F^*\overset{\eqref{weight-average}}{\le} \frac{\sum_{k=0}^{K-1}\eta_kF(x^{k+1})}{\sum_{k=0}^{K-1}\eta_k} - F^*  =\frac{1}{\sum_{k=0}^{K-1}\eta_k}\sum_{k=0}^{K-1}\eta_k(F(x^{k+1}) - F^*)\nonumber\\
&\overset{\eqref{ineq:desc-rec}}{\le} \frac{\|x^0-x^*\|^2}{2\sum_{k=0}^{K-1}\eta_k}  + \frac{(8(\alpha-1))^{\alpha-1}D_h^{2-\alpha}\sum_{k=0}^{K-1}\eta_k^\alpha\delta_k^\alpha}{\alpha^\alpha\sum_{k=0}^{K-1}\eta_k} + \frac{M_f^2\sum_{k=0}^{K-1}\eta_k^2}{\sum_{k=0}^{K-1}\eta_k} + \frac{\sum_{k=0}^{K-1}\eta_k\Delta_k}{\sum_{k=0}^{K-1}\eta_k} + \frac{\epsilon}{8}. \label{pf1-1stineq}
\end{align}

We now prove statement (i) of Theorem \ref{thm:cv-psg}. By \eqref{def:sigma1} and Lemma \ref{lem:tech-alp} with $c=(8(\alpha-1))^{\alpha-1}D_h^{2-\alpha}/\alpha^\alpha$,  $\hat{\eta}={\eta}$, and $\varepsilon=\epsilon$, one has
\begin{align}\label{pf-thm1-lem1-ie}
\frac{(8(\alpha-1))^{\alpha-1}D_h^{2-\alpha}}{\alpha^\alpha}\cdot\sigma^\alpha\eta^{\alpha-1} \overset{\eqref{ineq:tech-alp}}{\le} 8(\alpha-1)^2\Big(\frac{\sigma}{\alpha}\Big)^{\frac{\alpha}{\alpha-1}} \Big(\frac{8D_h}{\epsilon}\Big)^{\frac{2-\alpha}{\alpha-1}}\eta + \frac{\epsilon}{8} \overset{\eqref{def:sigma1}}{=} \Lambda(\epsilon)^2\eta + \frac{\epsilon}{8}.
\end{align}
In addition, recall from \eqref{def:zetak-deltak-1} and Assumption \ref{asp:basic}(c) that $\E_{\xi_k}[\Delta_k]=0$ and $\E_{\xi_k}[\delta_k^\alpha]\le\sigma^\alpha$. Using these, \eqref{def:diam-h}, \eqref{def:eta_k-psg-c}, \eqref{ineq:tech-qm}, \eqref{pf-thm1-lem1-ie}, $\eta_k\equiv\eta$ for all $k$, and taking expectation on \eqref{pf1-1stineq} with respect to $\{\xi_k\}_{k=0}^{K-1}$, we obtain that for all $k\ge0$,
\begin{align*}
& \E[F(z^K) - F(x^*)] \overset{\eqref{pf1-1stineq}}{\le} \ \frac{\|x^0-x^*\|^2}{2K\eta}  + \frac{(8(\alpha-1))^{\alpha-1}D_h^{2-\alpha}}{\alpha^\alpha}\cdot \sigma^\alpha\eta^{\alpha-1} + M_f^2\eta + \frac{\epsilon}{8}\\
& \overset{\eqref{def:diam-h}\eqref{pf-thm1-lem1-ie}}{\le}\ \frac{D_h^2}{2K\eta} + (M_f^2 + \Lambda(\epsilon)^2)\eta + \frac{\epsilon}{4} \overset{\eqref{def:eta_k-psg-c}}{=}\ \min_{\hat\eta}\bigg\{\frac{D_h^2}{2K\hat\eta} + (M_f^2 + \Lambda(\epsilon)^2)\hat\eta: \hat\eta\in\bigg(0,\frac{1}{4(L_f + L(\epsilon))}\bigg]\bigg\} + \frac{\epsilon}{4}, \\
& \ \overset{\eqref{ineq:tech-qm}}{\le} \ \frac{2D_h^2(L_f + L(\epsilon))}{K} + \sqrt{2}D_h\bigg(\frac{M_f^2 + \Lambda(\epsilon)^2}{K}\bigg)^{1/2} + \frac{\epsilon}{4} \le\ \frac{2D_h^2(L_f + L(\epsilon))}{K} + \frac{\sqrt{2}D_h(M_f + \Lambda(\epsilon))}{K^{1/2}} + \frac{\epsilon}{4}, 
\end{align*}
where the third inequality follows from \eqref{ineq:tech-qm} with $(a,b,c)=\big(\frac{D_h^2}{2K}, M_f^2 + \Lambda(\epsilon)^2, \frac{1}{4(L_f + L(\epsilon))}\big)$. Then, by this, one can see that $\E[F(z^K) - F^*] \le \epsilon/4 + \epsilon/2 + \epsilon/4=\epsilon$ holds for all $K$ satisfying \eqref{K1}.
Hence, statement (i) of Theorem \ref{thm:cv-psg} holds.

We next prove statement (ii) of Theorem \ref{thm:cv-psg}.  Recall from the definition of $\Delta_k$ in \eqref{def:zetak-deltak-1} and Assumption \ref{asp:basic}(c) that $\E_{\xi_k}[\Delta_k]=0$. In addition, by $x^k,x^*\in\mathrm{dom}\;h$, \eqref{def:diam-h}, and \eqref{def:zetak-deltak-1}, one has
\begin{align*}
|\Delta_k| \overset{\eqref{def:zetak-deltak-1}}{=} \big|(G(x^k;\xi_k) - \E[G(x^k;\xi_k)])^T(x^k - x^*)\big| \le D_h\|G(x^k;\xi_k) - \E[G(x^k;\xi_k)]\|\qquad\forall k\ge0,
\end{align*}
which along with Assumption \ref{asp:sub-weibull} implies that
\begin{align*}
\E_{\xi_k}\big[\exp\{|\Delta_k/(\sigma D_h)|^\alpha\}\big]
\le \E_{\xi_k}\big[\exp\{\|G(x^k;\xi_k) - \E[G(x^k;\xi_k)]\|^\alpha/\sigma^\alpha\}\big] \le \exp\{1\}\qquad\forall k\ge0.
\end{align*}
Hence, the assumptions of Lemma \ref{lem:mds} hold with $\phi^k=\Delta_k$ and $\varsigma=\sigma D_h$. It then follows from Lemma \ref{lem:mds} with $\Omega=\alpha(\ln(2/\delta)/(\alpha-1))^{(\alpha-1)/\alpha}$ that for any $\delta\in(0,1)$ and $K\ge\max\{1,\mathbbm{1}_{(1,2)}(\alpha)\ln(2/\delta)/(\alpha-1)\}$,
\begin{align}\label{ineq:prob-1}
\mathbb{P}\bigg(\frac{1}{K}\sum_{k=0}^{K-1}\Delta_k > \frac{\alpha D_h\sigma}{K^{(\alpha-1)/\alpha}}\cdot\bigg(\frac{\ln(2/\delta)}{\alpha-1}\bigg)^{\frac{\alpha-1}{\alpha}}\bigg) \le \frac{\delta}{2}.
\end{align}
In addition, it follows from the convexity of the exponential function and Assumption \ref{asp:sub-weibull} that for all $K\ge1$,
\begin{align*}
\E\bigg[\exp\bigg\{\frac{1}{K}\sum_{k=0}^{K-1}\frac{\delta_k^\alpha}{\sigma^\alpha} \bigg\}\bigg]\le \frac{1}{K}\sum_{k=0}^{K-1}\E\bigg[\exp\bigg\{\frac{\delta_k^\alpha}{\sigma^\alpha}\bigg\}\bigg] \le \exp\{1\}.   
\end{align*}
Using this and Markov's inequality, we obtain that for all $\delta\in(0,1)$ and $K\ge1$,
\begin{align}
& \mathbb{P}\bigg(\frac{1}{K}\sum_{k=0}^{K-1}\delta_k^\alpha > \Big(1+\ln\Big(\frac{2}{\delta}\Big)\Big)\sigma^\alpha \bigg)  = \mathbb{P}\bigg(\exp\bigg\{\frac{1}{K}\sum_{k=0}^{K-1}\frac{\delta_k^\alpha}{\sigma^\alpha} \bigg\} > \exp\Big\{1+\ln\Big(\frac{2}{\delta}\Big)\Big\}\bigg)\nonumber\\
&\le \exp\Big\{-1-\ln\Big(\frac{2}{\delta}\Big)\Big\} \cdot \E\bigg[\exp\bigg\{\frac{1}{K}\sum_{k=0}^{K-1}\frac{\delta_k^\alpha}{\sigma^\alpha} \bigg\}\bigg] \le \exp\Big\{-1-\ln\Big(\frac{2}{\delta}\Big)\Big\} \cdot  \exp\{1\} = \frac{\delta}{2}.\label{ineq:prob-12}
\end{align}
In view of \eqref{ineq:prob-1} and \eqref{ineq:prob-12}, we can see that for all $K\ge\max\{1,\mathbbm{1}_{(1,2)}(\alpha)\ln(2/\delta)/(\alpha-1)\}$,
\begin{align*}
\mathbb{P}\bigg(\bigg\{\frac{1}{K}\sum_{k=0}^{K-1}\Delta_k > \frac{\alpha D_h\sigma}{K^{(\alpha-1)/\alpha}}\cdot\bigg(\frac{\ln(2/\delta)}{\alpha-1}\bigg)^{\frac{\alpha-1}{\alpha}}\bigg\}\bigcup\bigg\{\frac{1}{K}\sum_{k=0}^{K-1}\delta_k^\alpha > \Big(1+\ln\Big(\frac{2}{\delta}\Big)\Big)\sigma^\alpha\bigg\}\bigg) \le \delta,
\end{align*}
which implies that
\begin{align}\label{ineq:prob-13}
\mathbb{P}\bigg(\bigg\{\frac{1}{K}\sum_{k=0}^{K-1}\Delta_k \le \frac{\alpha D_h\sigma}{K^{(\alpha-1)/\alpha}}\cdot\bigg(\frac{\ln(2/\delta)}{\alpha-1}\bigg)^{\frac{\alpha-1}{\alpha}}\bigg\}\bigcap\bigg\{\frac{1}{K}\sum_{k=0}^{K-1}\delta_k^\alpha \le \Big(1+\ln\Big(\frac{2}{\delta}\Big)\Big)\sigma^\alpha\bigg\}\bigg) \ge 1 - \delta.    
\end{align}
On the other hand, by \eqref{def:sigma1} and \eqref{ineq:tech-alp} with $c=(8(\alpha-1))^{\alpha-1}D_h^{2-\alpha}(1+\ln(2/\delta))/\alpha^\alpha$, $\hat{\eta}=\tilde{\eta}$, and $\varepsilon=\epsilon$, one has that
\begin{align}\label{pf-thm1-lem1-hp}
\frac{(8(\alpha-1))^{\alpha-1}D_h^{2-\alpha}(1+\ln(2/\delta))}{\alpha^\alpha}\cdot\sigma^\alpha\tilde{\eta}^{\alpha-1} &\overset{\eqref{ineq:tech-alp}}{\le} 8(\alpha-1)^2\Big(\frac{\sigma}{\alpha}\Big)^{\frac{\alpha}{\alpha-1}} \Big(\frac{8D_h}{\epsilon}\Big)^{\frac{2-\alpha}{\alpha-1}}\Big(1+\ln\Big(\frac{2}{\delta}\Big)\Big)^{\frac{1}{\alpha-1}}\tilde{\eta} + \frac{\epsilon}{8}\nonumber\\
&\overset{\eqref{def:sigma1}}{=} \widetilde{\Lambda}(\delta,\epsilon)^2\tilde{\eta} + \frac{\epsilon}{8}.
\end{align}
In view of \eqref{def:diam-h}, \eqref{def:eta_k-psg-c}, \eqref{ineq:tech-qm}, \eqref{ineq:prob-13},  \eqref{pf-thm1-lem1-hp}, and \eqref{pf1-1stineq} with $\eta_k\equiv\tilde{\eta}$ for all $k$, we have that for all $K\ge\max\{1,\mathbbm{1}_{(1,2)}(\alpha)\ln(2/\delta)/(\alpha-1)\}$, it holds that with probability at least $1-\delta$, 
\begin{align*} 
F(z^K) - F^* \overset{\eqref{pf1-1stineq}}{\le}&\ \frac{\|x^0-x^*\|^2}{2K\tilde{\eta}}  + \frac{(8(\alpha-1))^{\alpha-1}D_h^{2-\alpha}}{K\alpha^\alpha}\cdot \tilde{\eta}^{\alpha-1}\sum_{k=0}^{K-1}\delta_k^\alpha + \frac{1}{K}\sum_{k=0}^{K-1}\Delta_k + M_f^2\tilde{\eta} + \frac{\epsilon}{8}\\
\overset{\eqref{def:diam-h}\eqref{ineq:prob-13}}{\le}&\ \frac{D_h^2}{2K\tilde{\eta}}  + \frac{(8(\alpha-1))^{\alpha-1}D_h^{2-\alpha}(1+\ln(2/\delta))}{\alpha^\alpha} \cdot \sigma^\alpha\tilde{\eta}^{\alpha-1} + \frac{\alpha D_h\sigma}{K^{(\alpha-1)/\alpha}}\bigg(\frac{\ln(2/\delta)}{\alpha-1}\bigg)^{\frac{\alpha-1}{\alpha}} + M_f^2\tilde{\eta} + \frac{\epsilon}{8}\\
\overset{\eqref{pf-thm1-lem1-hp}}{\le}&\ \frac{D_h^2}{2K\tilde{\eta}} + (M_f^2 + \widetilde{\Lambda}(\delta,\epsilon)^2)\tilde{\eta} + \frac{\alpha D_h\sigma}{K^{(\alpha-1)/\alpha}}\bigg(\frac{\ln(2/\delta)}{\alpha-1}\bigg)^{\frac{\alpha-1}{\alpha}} + \frac{\epsilon}{4}\\
\overset{\eqref{def:eta_k-psg-c}}{=}&\ \min_{\hat\eta}\bigg\{\frac{D_h^2}{2K\hat{\eta}} + (M_f^2 + \widetilde{\Lambda}(\delta,\epsilon)^2)\hat{\eta}: \hat\eta\in\bigg(0,\frac{1}{4(L_f + L(\epsilon))}\bigg]\bigg\} + \frac{\alpha D_h\sigma}{K^{(\alpha-1)/\alpha}}\bigg(\frac{\ln(2/\delta)}{\alpha-1}\bigg)^{\frac{\alpha-1}{\alpha}} + \frac{\epsilon}{4} \\
\overset{\eqref{ineq:tech-qm}}{\le} &\ \frac{2D_h^2(L_f + L(\epsilon))}{K} + \sqrt{2}D_h\Big(\frac{M_f^2 + \widetilde{\Lambda}(\delta,\epsilon)^2}{K}\Big)^{1/2} + \frac{\alpha D_h\sigma}{K^{(\alpha-1)/\alpha}}\bigg(\frac{\ln(2/\delta)}{\alpha-1}\bigg)^{\frac{\alpha-1}{\alpha}} + \frac{\epsilon}{4}\\
\le &\ \frac{2D_h^2(L_f + L(\epsilon))}{K} + \frac{\sqrt{2}D_h(M_f + \widetilde{\Lambda}(\delta,\epsilon))}{K^{1/2}} + \frac{\alpha D_h\sigma}{K^{(\alpha-1)/\alpha}}\bigg(\frac{\ln(2/\delta)}{\alpha-1}\bigg)^{\frac{\alpha-1}{\alpha}} + \frac{\epsilon}{4},
\end{align*}
where the fourth inequality is due to \eqref{ineq:tech-qm} with $(a,b,c)=\big(\frac{D_h^2}{2K}, M_f^2 + \widetilde{\Lambda}(\delta,\epsilon)^2, \frac{1}{4(L_f + L(\epsilon))}\big)$. It then follows that $F(z^K) - F^* \le \epsilon/4+\epsilon/4+\epsilon/4+ \epsilon/4=\epsilon$ holds with probability at least $1-\delta$ for all $K$ satisfying \eqref{K2}.
Hence, statement (ii) of Theorem \ref{thm:cv-psg} holds.
\end{proof}

\subsection{Proof of the main results in Section \ref{sec:apsg}}\label{subsec:pf-apsg}

In this subsection, we first establish a lemma and then use it to prove Theorem~\ref{thm:cv-apsg}.

\begin{lemma}\label{lem:rec-apsg}
Suppose that Assumption~\ref{asp:basic} holds. Let $\epsilon\in(0,1)$ be arbitrarily chosen, $L_f$, $M_f$, $\alpha$, and $\sigma$ be given in Assumption \ref{asp:basic}, and $L(\cdot)$ be defined in \eqref{def:inexact-Lip}. Let $\{(x^k,y^k,z^k)\}$ be the sequence generated by Algorithm~\ref{alg:ac-dq-f} with input parameters $\{(\eta_k,\gamma_k)\}$ satisfying $4(L_f+L(\epsilon\gamma_k))\eta_k\gamma_k\le1$ and $\eta_{k+1}(\gamma_{k+1}^{-1} - 1)\le\eta_k\gamma_k^{-1}$ for all $k\ge0$. Then, it holds that for all $k\ge0$,    
\begin{align}
\eta_{k+1}(\gamma_{k+1}^{-1} - 1) (F(z^{k+1}) - F^*) \le&\ \eta_k(\gamma_k^{-1} - 1)(F(z^k) - F^*)  +  (\|x^k-x^*\|^2 - \|x^{k+1}-x^*\|^2)/2\nonumber\\[3pt]
&\ + M_f^2\eta_k^2 + \eta_k \Delta_k + \frac{(8(\alpha-1))^{\alpha-1}D_h^{2-\alpha}\delta_k^\alpha\eta_k^{\alpha}}{\alpha^\alpha} + \frac{\epsilon\eta_k}{8},\label{ineq:rec-apsg}
\end{align}
where 
\begin{align}\label{def:zetak-deltak-2}
\Delta_k=(\E[G(y^k;\xi_k)] - G(y^k;\xi_k))^T(x^k - x^*),\quad \delta_k=\|\E[G(y^k;\xi_k)] - G(y^k;\xi_k)\|\quad \forall k\ge0.    
\end{align}
\end{lemma}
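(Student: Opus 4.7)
The plan is to extend the per-iteration argument of Lemma~\ref{lem:desc-psg} to the three-sequence accelerated iteration, producing a potential-decrease bound whose weights realize the telescoping structure $A_{k+1}(F(z^{k+1})-F^*) \le A_k(F(z^k)-F^*) + (\text{errors})$ with $A_k = \eta_k(\gamma_k^{-1}-1)$. The algebraic identity used throughout is $z^{k+1} - y^k = \gamma_k(x^{k+1} - x^k)$, which follows directly from~\eqref{mid-update} and~\eqref{ag-update}. I would also repeatedly use $z^k,x^*\in\mathrm{dom}\,h\subseteq\mathrm{dom}\,f$, which in turn follows by an easy induction from Assumption~\ref{asp:basic}(b) and the convexity of $\mathrm{dom}\,h$.

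First, I would apply the descent inequality~\eqref{ineq:desc_new} at $(x,y)=(y^k,z^{k+1})$ with tolerance $\varepsilon = \epsilon\gamma_k$ (this scaling is crucial so that the residual $\varepsilon/8$ contributes exactly $\epsilon\eta_k/8$ after the final rescaling) and $f^\prime(y^k):=\E_{\xi_k}[G(y^k;\xi_k)]\in\partial f(y^k)$; substituting the key identity introduces factors $\gamma_k$ and $\gamma_k^2$. Second, I would invoke convexity of $f$ at $y^k$ evaluated at $w=(1-\gamma_k)z^k+\gamma_k x^*$ (for which $w-y^k=\gamma_k(x^*-x^k)$) and then convexity of $f$ on the segment $[z^k,x^*]$, yielding $f(y^k)+\gamma_k f^\prime(y^k)^T(x^*-x^k)\le (1-\gamma_k)f(z^k)+\gamma_k f(x^*)$. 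Third, the optimality condition of~\eqref{a-prox-step}, exactly as in the proof of Lemma~\ref{lem:desc-psg} but with $G(x^k;\xi_k)$ replaced by $G(y^k;\xi_k)$, combined with $h(z^{k+1})\le(1-\gamma_k)h(z^k)+\gamma_k h(x^{k+1})$, produces a bound on $\gamma_k h(x^{k+1})$ involving the three-point identity $\tfrac{\gamma_k}{2\eta_k}(\|x^k-x^*\|^2-\|x^{k+1}-x^*\|^2-\|x^{k+1}-x^k\|^2)$. Adding the bounds on $f(z^{k+1})$ and $h(z^{k+1})$ and subtracting $F^*$ yields
\[
F(z^{k+1})-F^* \le (1-\gamma_k)(F(z^k)-F^*) + \gamma_k(f^\prime(y^k)-G(y^k;\xi_k))^T(x^{k+1}-x^*) + \tfrac{\gamma_k}{2\eta_k}(\|x^k-x^*\|^2-\|x^{k+1}-x^*\|^2) + c_k\|x^{k+1}-x^k\|^2 + M_f\gamma_k\|x^{k+1}-x^k\| + \tfrac{\epsilon\gamma_k}{8},
\]
with $c_k=\tfrac{\gamma_k^2(L_f+L(\epsilon\gamma_k))}{2}-\tfrac{\gamma_k}{2\eta_k}$.

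Finally, I would multiply through by $\eta_k/\gamma_k$, split $x^{k+1}-x^*=(x^k-x^*)+(x^{k+1}-x^k)$ so the first piece of the inner product becomes $\eta_k\Delta_k$, absorb $M_f\eta_k\|x^{k+1}-x^k\|\le\tfrac14\|x^{k+1}-x^k\|^2+M_f^2\eta_k^2$, and estimate the noise cross-term via the same scaled Young inequality used in Lemma~\ref{lem:desc-psg} (based on $\alpha^\prime\ge 2$ and $\|x^{k+1}-x^k\|^{\alpha^\prime}\le D_h^{\alpha^\prime-2}\|x^{k+1}-x^k\|^2$), giving $\eta_k\delta_k\|x^{k+1}-x^k\|\le\tfrac18\|x^{k+1}-x^k\|^2+\tfrac{(8(\alpha-1))^{\alpha-1}D_h^{2-\alpha}\eta_k^\alpha\delta_k^\alpha}{\alpha^\alpha}$. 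The net coefficient of $\|x^{k+1}-x^k\|^2$ is then $\tfrac{\gamma_k(L_f+L(\epsilon\gamma_k))\eta_k}{2}-\tfrac12+\tfrac14+\tfrac18$, which is $\le\tfrac18-\tfrac12+\tfrac14+\tfrac18=0$ under the hypothesis $4(L_f+L(\epsilon\gamma_k))\eta_k\gamma_k\le 1$, so the quadratic term disappears. The left-hand side now reads $(\eta_k/\gamma_k)(F(z^{k+1})-F^*)$, and the second hypothesis $\eta_{k+1}(\gamma_{k+1}^{-1}-1)\le\eta_k/\gamma_k$ together with $F(z^{k+1})\ge F^*$ upgrades this to $\eta_{k+1}(\gamma_{k+1}^{-1}-1)(F(z^{k+1})-F^*)$, completing~\eqref{ineq:rec-apsg}. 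The main obstacle is precisely this coefficient bookkeeping: the three quadratic contributions (from smoothness, from the $M_f$-Young, and from the noise-Young) must add up to at most the budget $\gamma_k/(2\eta_k)$ coming from the prox step, and the choice $\varepsilon=\epsilon\gamma_k$ rather than $\varepsilon=\epsilon$ in~\eqref{ineq:desc_new} is essential to ensure the $\epsilon$-residual rescales to $\epsilon\eta_k/8$ rather than $\epsilon\eta_k/(8\gamma_k)$.
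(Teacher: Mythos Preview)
Your proposal is correct and follows essentially the same route as the paper's proof: the identity $z^{k+1}-y^k=\gamma_k(x^{k+1}-x^k)$, the descent inequality~\eqref{ineq:desc_new} at $(y^k,z^{k+1})$ with tolerance $\epsilon\gamma_k$, the prox optimality condition, two convexity steps, and the same pair of Young inequalities, with the step-size hypothesis killing the residual quadratic and the second hypothesis upgrading $\eta_k\gamma_k^{-1}$ to $\eta_{k+1}(\gamma_{k+1}^{-1}-1)$. The only cosmetic differences are that the paper works in the scaled form $\eta_k\gamma_k^{-1}F(z^{k+1})$ throughout (you divide, then re-multiply), and that the paper applies convexity of $f$ separately at $z^k$ and $x^*$ whereas you combine them via the auxiliary point $w=(1-\gamma_k)z^k+\gamma_k x^*$; both yield the same inequality.
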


\begin{proof}
Fix any $k\ge0$. Using \eqref{a-prox-step} and similar arguments as for deriving \eqref{inter1} with $x^k$ replaced by $y^k$, we can deduce that
\begin{align*}
h(x^{k+1})\le h(x^*) + G(y^k;\xi_k)^T(x^* -x^{k+1}) + (2\eta_k)^{-1}(\|x^k-x^*\|^2 - \|x^{k+1}-x^*\|^2 - \|x^k - x^{k+1}\|^2).
\end{align*}
By this, \eqref{ag-update}, and the convexity of $h$, one has
\begin{align}
\eta_k\gamma_k^{-1} h(z^{k+1}) \le&\ \eta_k(\gamma_k^{-1}-1) h(z^k) + \eta_k h(x^{k+1})\nonumber\\[3pt]   
\le&\ \eta_k(\gamma_k^{-1}-1) h(z^k) + \eta_k h(x^*) + (\|x^k-x^*\|^2 - \|x^{k+1}-x^*\|^2 - \|x^k - x^{k+1}\|^2)/2\nonumber \\[3pt]
&\ + \eta_kG(y^k;\xi_k)^T(x^* -x^{k+1}).\label{a-inter1}
\end{align}
In addition, notice from \eqref{mid-update} and \eqref{ag-update} that $z^{k+1} - y^k = \gamma_k(x^{k+1} - x^k)$. Denote $f^\prime(y^k) =\E_{\xi_k}[G(y^k;\xi_k)]$. It follows from \eqref{cond:ac} that  $f^\prime(y^k)\in\partial f(y^k)$. By these and \eqref{ineq:desc_new} with $(y,x)=(z^{k+1}, y^k)$, one has that
\begin{align}
& \eta_k\gamma_k^{-1}f(z^{k+1})\le \ \eta_k\gamma_k^{-1}\Big(f(y^k) + f^\prime(y^k)^T(z^{k+1} - y^k) + \frac{L_f + L(\epsilon\gamma_k)}{2}\|z^{k+1} - y^k\|^2 + \frac{\epsilon\gamma_k}{8} + M_f\|z^{k+1} - y^k\|\Big) \nonumber\\
&= \eta_k\gamma_k^{-1}(f(y^k) + f^\prime(y^k)^T(z^{k+1} - y^k))+ \frac{(L_f + L(\epsilon\gamma_k))\eta_k\gamma_k}{2}\|x^{k+1} - x^k\|^2 + \frac{\epsilon\eta_k}{8} + M_f\eta_k\|x^{k+1} - x^k\|.\label{a-inter2}
\end{align}
Also, it follows from \eqref{ag-update} and the convexity of $f$ that 
\begin{align*}
\eta_k\gamma_k^{-1}(f(y^k) + f^\prime(y^k)^T(z^{k+1} - y^k)) \overset{\eqref{ag-update}}{=} &\ \eta_k(\gamma_k^{-1} - 1)(f(y^k) + f^\prime(y^k)^T(z^k - y^k)) \\
& + \eta_k (f(y^k) + f^\prime(y^k)^T(x^{k+1} - y^k))\\
\le &\ \eta_k(\gamma_k^{-1} - 1)f(z^k) + \eta_k (f(y^k) + f^\prime(y^k)^T(x^{k+1} - y^k))\\
=&\ \eta_k(\gamma_k^{-1} - 1)f(z^k) + \eta_k (f(y^k) + f^\prime(y^k)^T(x^* - y^k))\\
& + \eta_k f^\prime(y^k)^T(x^{k+1} - x^*)\\
\le&\ \eta_k(\gamma_k^{-1} - 1)f(z^k) + \eta_k f(x^*) + \eta_k f^\prime(y^k)^T(x^{k+1} - x^*),
\end{align*}
where the first and second inequalities are due to the convexity of $f$. This along with \eqref{a-inter2} implies that
\begin{align}
\eta_k\gamma_k^{-1} f(z^{k+1})&\overset{\eqref{a-inter2}}{\le} \eta_k(\gamma_k^{-1} - 1)f(z^k) + \eta_k f(x^*) + \eta_k f^\prime(y^k)^T(x^{k+1} - x^*)\nonumber\\[5pt]
&\qquad + \frac{(L_f + L(\epsilon\gamma_k))\eta_k\gamma_k}{2}\|x^{k+1} - x^k\|^2 + \frac{\epsilon\eta_k}{8} + M_f\eta_k\|x^{k+1} - x^k\|.\nonumber
\end{align}
By this, \eqref{ag-update} and \eqref{a-inter1}, one has that
\begin{align}
\eta_k\gamma_k^{-1} F(z^{k+1}) 
\le&\ \eta_k(\gamma_k^{-1} - 1)F(z^k) + \eta_k F(x^*) + (\|x^k-x^*\|^2 - \|x^{k+1}-x^*\|^2)/2 + M_f\eta_k\|x^{k+1} - x^k\|\nonumber\\[5pt]
& +\Big(\frac{(L_f + L(\epsilon\gamma_k))\eta_k\gamma_k}{2}- \frac{1}{2}\Big)\|x^{k+1} - x^k\|^2 + \eta_k (f^\prime(y^k) - G(y^k;\xi_k))^T(x^{k+1} - x^*) + \frac{\epsilon\eta_k}{8}\nonumber\\[5pt]
\le&\ \eta_k(\gamma_k^{-1} - 1)F(z^k) + \eta_k F(x^*)  + (\|x^k-x^*\|^2 - \|x^{k+1}-x^*\|^2)/2 + \eta_k \Delta_k + M_f^2\eta_k^2\nonumber\\[5pt]
& +\Big(\frac{(L_f + L(\epsilon\gamma_k))\eta_k\gamma_k}{2}- \frac{1}{4}\Big)\|x^{k+1} - x^k\|^2 + \eta_k (f^\prime(y^k) - G(y^k;\xi_k))^T(x^{k+1} - x^k) + \frac{\epsilon\eta_k}{8},\label{a-inter4}
\end{align}
where the last inequality is due to \eqref{def:zetak-deltak-2} and $M_f\eta_k\|x^{k+1} - x^k\|\le \|x^{k+1} - x^k\|^2/4 + M_f^2\eta_k^2$. In addition, let $\alpha^\prime=\alpha/(\alpha-1)$. Observe that  $\alpha^\prime \geq 2$ due to $\alpha \in (1,2]$. This together with \eqref{def:diam-h} and $x^{k+1},x^k\in\mathrm{dom}\,h$ implies that $\|x^{k+1} - x^k\|^{\alpha^\prime}\le D_h^{\alpha^\prime-2}\|x^{k+1} - x^k\|^2$.  Using this,  \eqref{def:zetak-deltak-2}, and the Young's inequality, we obtain that
\begin{align*}
&(f^\prime(y^k) - G(y^k;\xi_k))^T(x^{k+1} - x^k)\le \frac{\Big(\Big(\frac{\alpha^\prime}{8D_h^{\alpha^\prime-2}\eta_k}\Big)^{1/\alpha^\prime}\|x^{k+1} - x^k\|\Big)^{\alpha^\prime}}{\alpha^\prime} + \frac{\Big(\Big(\frac{8D_h^{\alpha^\prime-2}\eta_k}{\alpha^\prime}\Big)^{1/\alpha^\prime}\delta_k\Big)^\alpha}{\alpha}\\[5pt]
&=\frac{\|x^{k+1} - x^k\|^{\alpha^\prime}}{8D_h^{\alpha^\prime-2}\eta_k} + \frac{(8(\alpha-1))^{\alpha-1}D_h^{2-\alpha}\delta_k^\alpha\eta_k^{\alpha-1}}{\alpha^\alpha} \le \frac{\|x^{k+1} - x^k\|^2}{8\eta_k} + \frac{(8(\alpha-1))^{\alpha-1}D_h^{2-\alpha}\delta_k^\alpha\eta_k^{\alpha-1}}{\alpha^\alpha}.
\end{align*}
This along with \eqref{a-inter4} and $4(L_f + L(\epsilon\gamma_k))\eta_k\gamma_k\le1$ implies that
\begin{align*}
\eta_k\gamma_k^{-1} (F(z^{k+1}) - F^*) \le&\ \eta_k(\gamma_k^{-1} - 1)(F(z^k) - F^*)  + (\|x^k-x^*\|^2 - \|x^{k+1}-x^*\|^2)/2  + \eta_k \Delta_k + M_f^2\eta_k^2\\[3pt]
&  + \Big(\frac{(L_f + L(\epsilon\gamma_k))\eta_k\gamma_k}{2} - \frac{1}{8}\Big)\|x^{k+1} - x^k\|^2 + \frac{(8(\alpha-1))^{\alpha-1}D_h^{2-\alpha}\delta_k^\alpha\eta_k^{\alpha}}{\alpha^\alpha} + \frac{\epsilon\eta_k}{8}\\[3pt]
\le&\ \eta_k(\gamma_k^{-1} - 1)(F(z^k) - F^*)  + (\|x^k-x^*\|^2 - \|x^{k+1}-x^*\|^2)/2 + \eta_k \Delta_k + M_f^2\eta_k^2 \\[3pt]
& + \frac{(8(\alpha-1))^{\alpha-1}D_h^{2-\alpha}\delta_k^\alpha\eta_k^{\alpha}}{\alpha^\alpha}  + \frac{\epsilon\eta_k}{8}.
\end{align*}
This inequality together with $\eta_{k+1}(\gamma_{k+1}^{-1} - 1)\le\eta_k\gamma_k^{-1}$ implies that the conclusion \eqref{ineq:rec-apsg} holds. 
\end{proof}

We are now ready to provide a proof of Theorem~\ref{thm:cv-apsg}.

\begin{proof}[\textbf{Proof of Theorem \ref{thm:cv-apsg}}]
Summing up  \eqref{ineq:rec-apsg} over $k=0,\ldots,K-1$, and rearranging terms, we obtain that 
\begin{align}
\eta_K(\gamma_K^{-1} - 1) (F(z^K) - F^*) & \le \eta_0(\gamma_0^{-1} - 1)(F(z^0) - F^*)  + \frac{\|x^0-x^*\|^2}{2}  + \sum_{k=0}^{K-1}\eta_k \Delta_k + M_f^2\sum_{k=0}^{K-1}\eta_k^2\nonumber\\
&\quad  + \frac{(8(\alpha-1))^{\alpha-1}D_h^{2-\alpha}}{\alpha^\alpha}\sum_{k=0}^{K-1}\delta_k^\alpha\eta_k^{\alpha} + \frac{\epsilon}{8}\sum_{k=0}^{K-1}\eta_k. \label{pf2-ineq1}   
\end{align}

We now prove statement (i) of Theorem \ref{thm:cv-apsg}.  Recall that $\gamma_k=2/(k+2)$ and $\eta_k=(k+2)\eta/2$ for all $k\ge0$. Using these, the expression of $\eta$ in \eqref{def:eta-apsg-c}, and the fact that $L(\cdot)$ is nonincreasing, we obtain that $4(L_f + L(\epsilon\gamma_k))\eta_k\gamma_k = 4(L_f + L(\epsilon\gamma_k))\eta \le 4(L_f + L(\epsilon/K))\eta\le 1$ holds for all $0\le k\le K$ and $K\ge2$. Also, one verify that
\begin{align*}
\eta_{k+1}(\gamma_{k+1}^{-1}-1) & =\eta\cdot\frac{k+3}{2}\cdot\frac{k+1}{2}=\frac{\eta(k+3)(k+1)}{4} \le \frac{\eta(k+2)^2}{4}  = \eta_k\gamma_k^{-1}\qquad\forall k\ge0.
\end{align*}
Hence, the assumptions of Lemma \ref{lem:rec-apsg} hold for $(\eta_k,\gamma_k)$ given in statement (i) of of Theorem \ref{thm:cv-apsg}. Also, by \eqref{def:sigma1} and \eqref{ineq:tech-alp} with $c=(8(\alpha-1))^{\alpha-1}D_h^{2-\alpha}/\alpha^\alpha$, $\hat{\eta}=\eta_k$, and $\varepsilon=\epsilon$, one has
\begin{align}\label{pf-thm2-lem1-ie}
\frac{(8(\alpha-1))^{\alpha-1}D_h^{2-\alpha}}{\alpha^\alpha}\cdot\sigma^\alpha\eta_k^{\alpha-1} \overset{\eqref{ineq:tech-alp}}{\le} 8(\alpha-1)^2\Big(\frac{\sigma}{\alpha}\Big)^{\frac{\alpha}{\alpha-1}} \Big(\frac{8D_h}{\epsilon}\Big)^{\frac{2-\alpha}{\alpha-1}}\eta_k + \frac{\epsilon}{8} \overset{\eqref{def:sigma1}}{=} \Lambda(\epsilon)^2\eta_k + \frac{\epsilon}{8}\qquad\forall k\ge0.
\end{align}
Using this, $\gamma_0=1$, \eqref{def:diam-h}, \eqref{pf-thm2-lem1-ie},  Assumption \ref{asp:basic}(c), and taking expectation on \eqref{pf2-ineq1} with respect to $\{\xi_k\}_{k=0}^{K-1}$, we obtain that for all $K\ge2$,
\begin{align*}
\eta_K(\gamma_K^{-1}-1)\E[F(z^K) - F^*] \le &\ \eta_0(\gamma_0^{-1} - 1)(F(z^0) - F^*) + \frac{\|x^0-x^*\|^2}{2} + M_f^2\sum_{k=0}^{K-1}\eta_k^2\\
&+ \frac{(8(\alpha-1))^{\alpha-1}D_h^{2-\alpha}}{\alpha^\alpha}\cdot \sigma^\alpha\sum_{k=0}^{K-1}\eta_k^{\alpha} + \frac{\epsilon}{8}\sum_{k=0}^{K-1}\eta_k\\
 \overset{\eqref{def:diam-h}\eqref{pf-thm2-lem1-ie}}{\le}&\ \eta_0(\gamma_0^{-1} - 1)(F(z^0) - F^*) + \frac{D_h^2}{2}  + (M_f^2+\Lambda(\epsilon)^2)\sum_{k=0}^{K-1}\eta_k^2 + \frac{\epsilon}{4}\sum_{k=0}^{K-1}\eta_k\\
= &\ \frac{D_h^2}{2}  + (M_f^2+\Lambda(\epsilon)^2)\sum_{k=0}^{K-1}\eta_k^2 + \frac{\epsilon}{4}\sum_{k=0}^{K-1}\eta_k,
\end{align*}
where the last equality is due to $\gamma_0=1$. Further, using this, \eqref{def:eta-apsg-c}, \eqref{ineq:tech-alp}, $\gamma_k=2/(k+2)$, $\eta_k=(k+2)\eta/2$, and rearranging the terms, we obtain that for all $K\ge2$,
\begin{align*}
\E[F(z^K) - F^*] \le &\ \frac{2D_h^2}{(K+2)K \eta} + \frac{(M_f^2 + \Lambda(\epsilon)^2)\eta}{(K+2)K}\sum_{k=0}^{K-1}(k+2)^2 + \frac{\epsilon}{2(K+2)K} \sum_{k=0}^{K-1}(k+2) \\
=&\ \frac{2D_h^2}{(K+2)K \eta} + \frac{(M_f^2 + \Lambda(\epsilon)^2)((K+1)(K+2)(2K+3)/6-1)\eta}{(K+2)K} + \frac{\epsilon((K+1)(K+2)/2-1)}{2(K+2)K} \\
\le&\ \frac{2D_h^2}{(K+2)K \eta} + \frac{(M_f^2 + \Lambda(\epsilon)^2)(2K+3)\eta}{3} + \frac{\epsilon}{2}\\
\overset{\eqref{def:eta-apsg-c}}{=}&\ \min_{\hat\eta}\bigg\{\frac{2D_h^2}{(K+2)K \hat\eta} + \frac{(M_f^2 + \Lambda(\epsilon)^2)(2K+3)\hat\eta}{3}: \hat\eta\in\bigg(0,\frac{1}{4(L_f + L(\epsilon/K))}\bigg]\bigg\} + \frac{\epsilon}{2}, \\
\overset{\eqref{ineq:tech-alp}}{\le}&\ \frac{8D_h^2(L_f + L(\epsilon/K))}{(K+2)K} + 4D_h\bigg(\frac{M_f^2 + \Lambda(\epsilon)^2}{3K}\bigg)^{1/2} + \frac{\epsilon}{2} \\
{\le}&\ \frac{8D_h^2L_f}{K^2} + \frac{8D_h^2L(\epsilon)}{K^{(1+3\nu)/(1+\nu)}} + \frac{4D_h(M_f + \Lambda(\epsilon))}{\sqrt{3}K^{1/2}} + \frac{\epsilon}{2},
\end{align*}
where the third inequality follows from  \eqref{ineq:tech-alp} with $(a,b,c)=\big(\frac{2D_h^2}{(K+2)K}, \frac{(M_f^2 + \Lambda(\epsilon)^2)(2K+3)}{3}, \frac{1}{4(L_f+L(\epsilon/K))}\big)$, and the last equality is due to \eqref{def:inexact-Lip} and $K\ge2$. It then follows that $\E[F(z^K) - F^*] \le \epsilon/6+\epsilon/6+\epsilon/6 + \epsilon/2=\epsilon$ holds for all $K$ satisfying \eqref{K3}.
Hence, statement (i) of Theorem \ref{thm:cv-apsg} holds.

We next prove statement (ii) of Theorem \ref{thm:cv-apsg}. Similar to the proof of statement (i), one can show that the assumptions of Lemma \ref{lem:rec-apsg} hold for $(\eta_k,\gamma_k)$ defined in statement (ii) of Theorem \ref{thm:cv-apsg}. Recall from the expression of $\Delta_k$ in \eqref{def:zetak-deltak-2} and Assumption \ref{asp:basic}(c) that $\E_{\xi_k}[\Delta_k]=0$. In addition, by $x^k,x^*\in\mathrm{dom}\;h$, \eqref{def:diam-h}, \eqref{def:zetak-deltak-2}, and $\eta_k=(k+2)\tilde\eta/2$, one has that 
\begin{align*}
|\eta_k\Delta_k| \overset{\eqref{def:zetak-deltak-2}}{=} |\eta_k(G(y^k;\xi_k) - \E[G(y^k;\xi_k)])^T(x^k - x^*)| \le \eta_K D_h\|G(y^k;\xi_k) - \E[G(y^k;\xi_k)]\|\qquad\forall 0\le k\le K,
\end{align*}
which along with Assumption \ref{asp:sub-weibull} implies that
\begin{align*}
\E_{\xi_k}\big[\exp\{|\eta_k\Delta_k/(\sigma \eta_K D_h)|^\alpha\}\big]
\le \E_{\xi_k}\big[\exp\{\|G(y^k;\xi_k) - \E[G(y^k;\xi_k)]\|^\alpha/\sigma^\alpha\}\big] \le \exp\{1\}\qquad\forall 0\le k\le K.
\end{align*}   
Hence, the assumptions of Lemma \ref{lem:mds} hold with $\phi^k=\eta_k\Delta_k$ and $\varsigma=\sigma\eta_K D_h$. In addition, it follows from the convexity of the exponential function and Assumption \ref{asp:sub-weibull} that for all $K\ge1$,
\begin{align*}
\E\bigg[\exp\bigg\{\frac{1}{K}\sum_{k=0}^{K-1}\frac{\delta_k^\alpha}{\sigma^\alpha} \bigg\}\bigg]\le \frac{1}{K}\sum_{k=0}^{K-1}\E\bigg[\exp\bigg\{\frac{\delta_k^\alpha}{\sigma^\alpha}\bigg\}\bigg] \le \exp\{1\}.   
\end{align*}
Using these and similar arguments as for proving \eqref{ineq:prob-13}, we obtain that for all $K\ge\max\{2,\mathbbm{1}_{(1,2)}(\alpha)\ln(2/\delta)/(\alpha-1)\}$, it holds  that
\begin{align}\label{ineq:prob-23}
\mathbb{P}\bigg(\frac{1}{K}\sum_{k=0}^{K-1}\eta_k\Delta_k \le \frac{\alpha D_h\sigma}{K^{(\alpha-1)/\alpha}}\cdot\bigg(\frac{\ln(2/\delta)}{\alpha-1}\bigg)^{\frac{\alpha-1}{\alpha}}\cdot \eta_K\bigg\}\bigcap\bigg\{\frac{1}{K}\sum_{k=0}^{K-1}\delta_k^\alpha \le \Big(1+\ln\Big(\frac{2}{\delta}\Big)\Big)\sigma^\alpha\bigg\}\bigg) \ge 1 - \delta.
\end{align}
On the other hand, by \eqref{def:sigma1} and  \eqref{ineq:tech-alp} with $c=(8(\alpha-1))^{\alpha-1}D_h^{2-\alpha}(1+\ln(2/\delta))/\alpha^\alpha$, $\hat{\eta}=\eta_K$, and $\varepsilon=\epsilon$, one has that
\begin{align}\label{pf-thm2-lem1-hp}
\frac{(8(\alpha-1))^{\alpha-1}D_h^{2-\alpha}(1+\ln(2/\delta))}{\alpha^\alpha}\cdot\sigma^\alpha\eta_K^{\alpha-1} &\overset{\eqref{ineq:tech-alp}}{\le} 8(\alpha-1)^2\Big(\frac{\sigma}{\alpha}\Big)^{\frac{\alpha}{\alpha-1}} \Big(\frac{8D_h}{\epsilon}\Big)^{\frac{2-\alpha}{\alpha-1}}\Big(1+\ln\Big(\frac{2}{\delta}\Big)\Big)^{\frac{1}{\alpha-1}}\eta_K + \frac{\epsilon}{8}\nonumber\\
&\overset{\eqref{def:sigma1}}{=} \widetilde{\Lambda}(\delta,\epsilon)^2\eta_K + \frac{\epsilon}{8}.
\end{align}
Using these, \eqref{pf2-ineq1}, $\gamma_0=1$, $\gamma_k=2/(k+2)$, and $\eta_k=(k+2)\tilde{\eta}/2$, we obtain that for all $K\ge\max\{2,\mathbbm{1}_{(1,2)}(\alpha)\ln(2/\delta)/(\alpha-1)\}$, it holds that with probability at least $1-\delta$,
\begin{align*} 
&\tilde{\eta}(K+2)K (F(z^K) - F^*)/4 = \eta_K(\gamma_K^{-1} - 1) (F(z^K) - F^*)\\
&\overset{\eqref{pf2-ineq1}}{\le} \eta_0(\gamma_0^{-1} - 1)(F(z^0) - F^*)  + \frac{D_h^2}{2}   + \frac{(8(\alpha-1))^{\alpha-1}D_h^{2-\alpha}}{\alpha^\alpha}\eta_K^{\alpha}\sum_{k=0}^{K-1}\delta_k^\alpha + M_f^2\sum_{k=0}^{K-1}\eta_k^2 + \sum_{k=0}^{K-1}\eta_k \Delta_k + \frac{\epsilon}{8}\sum_{k=0}^{K-1}\eta_k \\
&\overset{\eqref{ineq:prob-23}}{\le}\eta_0(\gamma_0^{-1} - 1)(F(z^0) - F^*)  + \frac{D_h^2}{2} + \frac{(8(\alpha-1))^{\alpha-1}D_h^{2-\alpha}(1+\ln(2/\delta))}{\alpha^\alpha}\sigma^\alpha\eta_K^{\alpha}K + M_f^2\sum_{k=0}^{K-1}\eta_k^2\nonumber\\
&\qquad + \frac{\alpha\sigma D_h}{K^{(\alpha-1)/\alpha}}\cdot\bigg(\frac{\ln(2/\delta)}{\alpha-1}\bigg)^{\frac{\alpha-1}{\alpha}}\cdot K\eta_K + \frac{\epsilon}{8} \sum_{k=0}^{K-1}\eta_k \\ 
&\overset{\eqref{pf-thm2-lem1-hp}}{\le} \eta_0(\gamma_0^{-1} - 1)(F(z^0) - F^*) + \frac{D_h^2}{2} + (M_f^2 + \widetilde\Lambda(\delta,\epsilon)^2) K \eta_K^2   + \frac{\alpha\sigma D_h}{K^{(\alpha-1)/\alpha}}\cdot\bigg(\frac{\ln(2/\delta)}{\alpha-1}\bigg)^{\frac{\alpha-1}{\alpha}}\cdot K\eta_K\\
&\qquad + \frac{\epsilon}{8}\Big(\sum_{k=0}^{K-1}\eta_k + K\eta_K\Big) \\
&\le  \frac{D_h^2}{2} + (M_f^2 + \widetilde\Lambda(\delta,\epsilon)^2)\Big(\frac{K+2}{2}\Big)^2K \tilde{\eta}^2 + \frac{\alpha\sigma D_h}{K^{(\alpha-1)/\alpha}}\cdot\bigg(\frac{\ln(2/\delta)}{\alpha-1}\bigg)^{\frac{\alpha-1}{\alpha}}\cdot \frac{(K+2)K}{2}\tilde{\eta} + \frac{\epsilon(K+2)K\tilde{\eta}}{8},
\end{align*}
where the last inequality is due to $\gamma_0=1$ and $\eta_k=(k+2)\tilde{\eta}/2$ for all $k\ge0$. Further,  
by \eqref{def:eta-apsg-c-1}, \eqref{ineq:tech-alp}, and rearranging the terms in the above inequality, one has that for all $K\ge\max\{2,\mathbbm{1}_{(1,2)}(\alpha)\ln(2/\delta)/(\alpha-1)\}$, it holds that with probability at least $1-\delta$,
\begin{align*}
F(z^K) - F^* \le&\ \frac{2D_h^2}{(K+2)K\tilde{\eta}} + (M_f^2+\widetilde\Lambda(\delta,\epsilon)^2)(K+2)\tilde{\eta} + \frac{2\alpha\sigma D_h}{K^{(\alpha-1)/\alpha}}\bigg(\frac{\ln(2/\delta)}{\alpha-1}\bigg)^{\frac{\alpha-1}{\alpha}} + \frac{\epsilon}{2}\\
\overset{\eqref{def:eta-apsg-c-1}}{=}&\ \min_{\hat\eta}\bigg\{\frac{2D_h^2}{(K+2)K \hat\eta} + (M_f^2+\widetilde\Lambda(\delta,\epsilon)^2)(K+2)\hat\eta: \hat\eta\in\bigg(0,\frac{1}{4(L_f + L(\epsilon/K))}\bigg]\bigg\} \\
&\ + \frac{2\alpha\sigma D_h}{K^{(\alpha-1)/\alpha}}\bigg(\frac{\ln(2/\delta)}{\alpha-1}\bigg)^{\frac{\alpha-1}{\alpha}} + \frac{\epsilon}{2}, \\
\le&\ \frac{8D_h^2(L_f + L(\epsilon/K))}{(K+2)K} + 2\sqrt{2}D_h\Big(\frac{M_f^2 + \widetilde\Lambda(\delta,\epsilon)^2}{K}\Big)^{1/2} + \frac{2\alpha\sigma D_h}{K^{(\alpha-1)/\alpha}}\bigg(\frac{\ln(2/\delta)}{\alpha-1}\bigg)^{\frac{\alpha-1}{\alpha}} + \frac{\epsilon}{2}\\
\le&\ \frac{8D_h^2L_f}{K^2} + \frac{8D_h^2L(\epsilon)}{K^{(1+3\nu)/(1+\nu)}} + \frac{2\sqrt{2}D_h(M_f + \widetilde\Lambda(\delta,\epsilon))}{K^{1/2}} + \frac{2\alpha\sigma D_h}{K^{(\alpha-1)/\alpha}}\bigg(\frac{\ln(2/\delta)}{\alpha-1}\bigg)^{\frac{\alpha-1}{\alpha}} + \frac{\epsilon}{2},
\end{align*}
where the second inequality is due to \eqref{ineq:tech-alp} with $(a,b,c)=\big(\frac{2D_h^2}{(K+2)K},(M_f^2 + \widetilde\Lambda(\delta,\epsilon)^2)(K+2), \frac{1}{4(L_f + L(\epsilon/K))}\big)$. It then follows that $F(z^K) - F^* \le \epsilon/8+\epsilon/8+\epsilon/8+\epsilon/8 + \epsilon/2=\epsilon$ holds with probability at least $1-\delta$ for all $K$ satisfying \eqref{K4}.
Hence, statement (ii) of Theorem \ref{thm:cv-apsg} holds.
\end{proof}




\bibliographystyle{abbrv}
\bibliography{ref}

\begin{thebibliography}{10}

\bibitem{alacaoglu2025towards}
A.~Alacaoglu, Y.~Malitsky, and S.~J. Wright.
\newblock Towards weaker variance assumptions for stochastic optimization.
\newblock {\em arXiv preprint arXiv:2504.09951}, 2025.

\bibitem{bottou2018optimization}
L.~Bottou, F.~E. Curtis, and J.~Nocedal.
\newblock Optimization methods for large-scale machine learning.
\newblock {\em SIAM Review}, 60(2):223--311, 2018.

\bibitem{cutkosky2021high}
A.~Cutkosky and H.~Mehta.
\newblock High-probability bounds for non-convex stochastic optimization with heavy tails.
\newblock In {\em Advances in Neural Information Processing Systems}, volume~34, pages 4883--4895, 2021.

\bibitem{davis2021low}
D.~Davis, D.~Drusvyatskiy, L.~Xiao, and J.~Zhang.
\newblock From low probability to high confidence in stochastic convex optimization.
\newblock {\em Journal of Machine Learning Research}, 22(49):1--38, 2021.

\bibitem{fatkhullin2025can}
I.~Fatkhullin, F.~H{\"u}bler, and G.~Lan.
\newblock Can {SGD} handle heavy-tailed noise?
\newblock {\em arXiv preprint arXiv:2508.04860}, 2025.

\bibitem{foster2019complexity}
D.~J. Foster, A.~Sekhari, O.~Shamir, N.~Srebro, K.~Sridharan, and B.~Woodworth.
\newblock The complexity of making the gradient small in stochastic convex optimization.
\newblock In {\em Conference on Learning Theory}, pages 1319--1345, 2019.

\bibitem{ghadimi2019generalized}
S.~Ghadimi, G.~Lan, and H.~Zhang.
\newblock Generalized uniformly optimal methods for nonlinear programming.
\newblock {\em Journal of Scientific Computing}, 79(3):1854--1881, 2019.

\bibitem{gorbunov2020stochastic}
E.~Gorbunov, M.~Danilova, and A.~Gasnikov.
\newblock Stochastic optimization with heavy-tailed noise via accelerated gradient clipping.
\newblock In {\em Advances in Neural Information Processing Systems}, volume~33, pages 15042--15053, 2020.

\bibitem{grant2008cvx}
M.~Grant, S.~Boyd, and Y.~Ye.
\newblock {CVX}: Matlab software for disciplined convex programming, 2008.

\bibitem{grimmer2024optimal}
B.~Grimmer.
\newblock On optimal universal first-order methods for minimizing heterogeneous sums.
\newblock {\em Optimization Letters}, 18(2):427--445, 2024.

\bibitem{guigues2024universal}
V.~Guigues, J.~Liang, and R.~D. Monteiro.
\newblock Universal subgradient and proximal bundle methods for convex and strongly convex hybrid composite optimization.
\newblock {\em arXiv preprint arXiv:2407.10073}, 2024.

\bibitem{gurbuzbalaban2021heavy}
M.~Gurbuzbalaban, U.~Simsekli, and L.~Zhu.
\newblock The heavy-tail phenomenon in {SGD}.
\newblock In {\em International Conference on Machine Learning}, pages 3964--3975, 2021.

\bibitem{he2025complexity}
C.~He, Z.~Lu, D.~Sun, and Z.~Deng.
\newblock Complexity of normalized stochastic first-order methods with momentum under heavy-tailed noise.
\newblock {\em arXiv preprint arXiv:2506.11214}, 2025.

\bibitem{hubler2024gradient}
F.~H{\"u}bler, I.~Fatkhullin, and N.~He.
\newblock From gradient clipping to normalization for heavy tailed {SGD}.
\newblock In {\em International Conference on Artificial Intelligence and Statistics}, 2025.

\bibitem{lan2012optimal}
G.~Lan.
\newblock An optimal method for stochastic composite optimization.
\newblock {\em Mathematical Programming}, 133(1):365--397, 2012.

\bibitem{lan2012validation}
G.~Lan, A.~Nemirovski, and A.~Shapiro.
\newblock Validation analysis of mirror descent stochastic approximation method.
\newblock {\em Mathematical Programming}, 134(2):425--458, 2012.

\bibitem{liang2024single}
J.~Liang, V.~Guigues, and R.~D. Monteiro.
\newblock A single cut proximal bundle method for stochastic convex composite optimization.
\newblock {\em Mathematical Programming}, 208(1):173--208, 2024.

\bibitem{liu2024high}
L.~Liu, Y.~Wang, and L.~Zhang.
\newblock High-probability bound for non-smooth non-convex stochastic optimization with heavy tails.
\newblock In {\em International Conference on Machine Learning}, 2024.

\bibitem{liu2023stochastic}
Z.~Liu and Z.~Zhou.
\newblock Stochastic nonsmooth convex optimization with heavy-tailed noises: High-probability bound, in-expectation rate and initial distance adaptation.
\newblock {\em arXiv preprint arXiv:2303.12277}, 2023.

\bibitem{liu2025nonconvex}
Z.~Liu and Z.~Zhou.
\newblock Nonconvex stochastic optimization under heavy-tailed noises: Optimal convergence without gradient clipping.
\newblock In {\em International Conference on Learning Representations}, 2025.

\bibitem{moulines2011non}
E.~Moulines and F.~Bach.
\newblock Non-asymptotic analysis of stochastic approximation algorithms for machine learning.
\newblock In {\em Advances in Neural Information Processing Systems}, volume~24, 2011.

\bibitem{nemirovski2009robust}
A.~Nemirovski, A.~Juditsky, G.~Lan, and A.~Shapiro.
\newblock Robust stochastic approximation approach to stochastic programming.
\newblock {\em SIAM Journal on Optimization}, 19(4):1574--1609, 2009.

\bibitem{nemirovski1983problem}
A.~Nemirovski and D.~Yudin.
\newblock {\em Problem Complexity and Method Efficiency in Optimization}.
\newblock Wiley-Interscience Series in Discrete Mathematics. John Wiley \& Sons, 1983.

\bibitem{nesterov2015universal}
Y.~Nesterov.
\newblock Universal gradient methods for convex optimization problems.
\newblock {\em Mathematical Programming}, 152(1-2):381--404, 2015.

\bibitem{nesterov2025universal}
Y.~Nesterov.
\newblock Universal complexity bounds for universal gradient methods in nonlinear optimization.
\newblock {\em arXiv preprint arXiv:2509.20902}, 2025.

\bibitem{nguyen2023improved}
T.~D. Nguyen, T.~H. Nguyen, A.~Ene, and H.~Nguyen.
\newblock Improved convergence in high probability of clipped gradient methods with heavy tailed noise.
\newblock In {\em Advances in Neural Information Processing Systems}, volume~36, pages 24191--24222, 2023.

\bibitem{polyak1992acceleration}
B.~T. Polyak and A.~B. Juditsky.
\newblock Acceleration of stochastic approximation by averaging.
\newblock {\em SIAM Journal on Control and Optimization}, 30(4):838--855, 1992.

\bibitem{robbins1951stochastic}
H.~Robbins and S.~Monro.
\newblock A stochastic approximation method.
\newblock {\em The Annals of Mathematical Statistics}, pages 400--407, 1951.

\bibitem{sadiev2023high}
A.~Sadiev, M.~Danilova, E.~Gorbunov, S.~Horv{\'a}th, G.~Gidel, P.~Dvurechensky, A.~Gasnikov, and P.~Richt{\'a}rik.
\newblock High-probability bounds for stochastic optimization and variational inequalities: The case of unbounded variance.
\newblock In {\em International Conference on Machine Learning}, pages 29563--29648, 2023.

\bibitem{shalev2009stochastic}
S.~Shalev-Shwartz, O.~Shamir, N.~Srebro, and K.~Sridharan.
\newblock Stochastic convex optimization.
\newblock In {\em Conference on Learning Theory}, volume~2, page~5, 2009.

\bibitem{simsekli2019heavy}
U.~Simsekli, M.~G{\"u}rb{\"u}zbalaban, T.~H. Nguyen, G.~Richard, and L.~Sagun.
\newblock On the heavy-tailed theory of stochastic gradient descent for deep neural networks.
\newblock {\em arXiv preprint arXiv:1912.00018}, 222, 2019.

\bibitem{simsekli2019tail}
U.~Simsekli, L.~Sagun, and M.~Gurbuzbalaban.
\newblock A tail-index analysis of stochastic gradient noise in deep neural networks.
\newblock In {\em International Conference on Machine Learning}, pages 5827--5837, 2019.

\bibitem{sun2024gradient}
T.~Sun, X.~Liu, and K.~Yuan.
\newblock Gradient normalization provably benefits nonconvex {SGD} under heavy-tailed noise.
\newblock {\em arXiv preprint arXiv:2410.16561}, 2024.

\bibitem{Tse08}
P.~Tseng.
\newblock On accelerated proximal gradient methods for convex-concave optimization.
\newblock Manuscript, May 2008.

\bibitem{vladimirova2020sub}
M.~Vladimirova, S.~Girard, H.~Nguyen, and J.~Arbel.
\newblock Sub-{W}eibull distributions: Generalizing sub-{G}aussian and sub-exponential properties to heavier tailed distributions.
\newblock {\em Stat}, 9(1):e318, 2020.

\bibitem{zhang2020adaptive}
J.~Zhang, S.~P. Karimireddy, A.~Veit, S.~Kim, S.~Reddi, S.~Kumar, and S.~Sra.
\newblock Why are adaptive methods good for attention models?
\newblock In {\em Advances in Neural Information Processing Systems}, volume~33, pages 15383--15393, 2020.

\end{thebibliography}

\end{document}